\newtheorem{main}{Theorem}
\newcommand{\sub}{\subseteq}
\newcommand{\floor}[1]{\lfloor{#1}\rfloor}
\newcommand{\ceil}[1]{\lceil{#1}\rceil}
\DeclareMathOperator{\Sp}{Span}
\renewcommand{\det}{\mathrm{det}}
\newcommand{\e}{\epsilon}
\newcommand{\sm}{\setminus}
\title{Slice rank and partition rank of the determinant}
\date{}
\author[1]{Amichai Lampert}
\author[2]{Guy Moshkovitz}
\affil[1]{University of Michigan}
\affil[2]{City University of New York}
\begin{document}
 \maketitle

\begin{abstract}
    The Laplace expansion expresses the $n \times n$ determinant $\det_n$ as a sum of $n$ products. Do shorter expansions exist? In this paper we:
    \begin{itemize}
	\item Fully determine the slice rank decompositions of $\det_n$ (where each product must contain a linear factor): In this case, we show that $n$ summands are necessary, and moreover, the only such expansions with $n$ summands are equivalent (in a precise sense) to the Laplace expansion.	
	\item Prove a logarithmic lower bound for the partition rank of $\det_n$ (where each product is of multilinear forms): In this case, we show that at least $\log_2(n)+1$ summands are needed. We also explain why existing techniques fail to yield any nontrivial lower bound, and why our new method cannot give a super-logarithmic lower bound.
	\item Separate partition rank from slice rank for $\det_n$: we find a quadratic expansion for $\det_4$, over any field, with fewer summands than the Laplace expansion. This construction is related to a well-known example of Green-Tao and Lovett-Meshulam-Samorodnitsky disproving the naive version of the Gowers Inverse conjecture over small fields.
\end{itemize}
    An important motivation for these questions comes from the challenge of separating structure and randomness for tensors. On the one hand, we show that the random construction fails to separate: for a random tensor of partition rank $r$, the analytic rank is $r-o(1)$ with high probability. On the other hand, our results imply that the determinant yields the first asymptotic separation between partition rank and analytic rank of $d$-tensors, with their ratio tending to infinity with $d$.
\end{abstract}

\section{Introduction}

The determinant is one of the most well known polynomials in mathematics, and formulas for it are naturally important. The Laplace (row) expansion is the principal such formula, expressing the determinant of $n \times n$ matrices as a sum of products:
for any $i$, $\det(A) = \sum_{j=1}^n (-1)^{i+j}a_{i,j} \det(A_{i,j}),$
where $A_{i,j}$ is obtained from $A$ by removing row $i$ and column $j$.
The number of summands in the Laplace expansion for the $n \times n$ determinant is $n$. Are there expansions of the determinant with fewer summands?
In this paper we initiate the study of such expansions. This follows in the tradition of~\cite{Derksen16,Land-Teit10}, where other notions of rank for the determinant were studied.


We denote by $\det_n$ the determinant polynomial of $n \times n$ matrices. The \emph{slice rank}~\cite{Tao16,SawinTao16} of the determinant equals the minimal number of summands when we require each of the products to have a linear factor. Our first result completely characterizes these expansions.

\begin{main}[see Theorem~\ref{thm:slice-unique}] 
    $\srk(\det_n)=n$ for every $n\ge 2$, and the only minimum slice rank expansions are all ``equivalent'' to the Laplace expansion.
\end{main}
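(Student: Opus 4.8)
\medskip
\noindent\textbf{Proof proposal.}
Laplace expansion along a row gives $\srk(\det_n)\le n$, so what needs proving is the lower bound $\srk(\det_n)\ge n$ and the rigidity statement; I would treat both at once by dissecting an arbitrary expansion $\det_n=\sum_{k=1}^{r}\ell_k Q_k$ in which each $\ell_k$ is a linear form in one row $x_{i_k}$ of the $n\times n$ matrix of variables and each $Q_k$ is a polynomial in the remaining rows. Group the slices by row: for $i\in[n]$ put $\Lambda_i=\mathrm{span}\{\ell_k:i_k=i\}\subseteq(F^n)^{\ast}$, $\rho_i=\dim\Lambda_i$, and $V_i=\Lambda_i^{\perp}\subseteq F^n$, so $\dim V_i=n-\rho_i$ and $\sum_i\rho_i\le r$. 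Setting every slice to zero, $\det_n$ vanishes identically on the subspace $L=\{A:\mathrm{row}_i(A)\in V_i\ \text{for all }i\}$; equivalently, every matrix whose $i$-th row lies in $V_i$ is singular.

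\textbf{Lower bound.} If some vectors $w_i\in V_i$ were linearly independent, the matrix with rows $w_1,\dots,w_n$ would be an invertible element of $L$, a contradiction. Hence, by Rado's transversal theorem, there is a nonempty $S\subseteq[n]$ with $\dim\bigl(\sum_{i\in S}V_i\bigr)\le|S|-1$; as $V_i\subseteq\sum_{j\in S}V_j$ for $i\in S$, this gives $n-\rho_i\le|S|-1$, so $r\ge\sum_{i\in S}\rho_i\ge|S|\,(n-|S|+1)\ge n$, the last inequality because $s\mapsto s(n-s+1)$ is $\ge n$ on $\{1,\dots,n\}$ with equality only at $s\in\{1,n\}$. (Alternatively $r\ge n$ — and the ``unstructured'' variant, where the $\ell_k$ may be linear in all $n^2$ entries — follows by applying Flanders' theorem, that a linear space of singular $n\times n$ matrices has dimension at most $n^2-n$, to $L$.)

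\textbf{Rigidity.} Assume $r=n$, so all the inequalities are tight: the $\ell_k$ are linearly independent, $\rho_i=\#\{k:i_k=i\}$, $|S|\in\{1,n\}$, and $\sum_{i\in S}\rho_i=n$. If $|S|=1$, say $S=\{i_0\}$, then $\rho_{i_0}=n$ and $\rho_i=0$ otherwise, so all $n$ slices lie in row $i_0$ and span $(F^n)^{\ast}$. If $|S|=n$, then $\rho_i=1$ for all $i$ and $\dim\bigl(\sum_iV_i\bigr)\le n-1$ forces the hyperplanes $V_i$ to coincide, so all $\Lambda_i$ are one common line $F\phi$ and each slice is a scalar multiple of $x_k\mapsto\phi(x_k)$, one per row. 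Applying a change of basis to the columns — arbitrary in the first case, one carrying $\phi$ to the last coordinate in the second — which multiplies $\det_n$ by a nonzero scalar and so carries slice-rank expansions to slice-rank expansions, and recombining the $n$ summands within the (now fixed) span of their slices, the expansion becomes $\det_n=\sum_k a_{i_0k}R_k$ or $\det_n=\sum_k a_{kn}R_k$. Since $\det_n$ is linear in each of those $n$ variables and the corresponding minors are linearly independent, comparing coefficients pins each $R_k$ to the matching cofactor $(-1)^{i_0+k}\det(A_{i_0k})$, resp.\ $(-1)^{k+n}\det(A_{kn})$, up to a combination $\sum_k a_{kn}E_k\equiv0$ (null combinations occur only in the second, ``column'', pattern, where the cofactors involve the slice variables). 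This is the Laplace expansion up to the symmetries of $\det_n$ — column and row changes of basis, transposition (which interchanges the row and column patterns), and permutations — and a harmless rescaling/regrouping of the summands.

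\textbf{Main obstacle.} The combinatorics is short; the real work lies in the rigidity part: extracting from tightness that $|S|\in\{1,n\}$ and, in the $|S|=n$ case, that the $V_i$ collapse to a single hyperplane; and then accounting honestly for all the admissible modifications — column/row change of basis, transposition, permutation, rescaling of individual products, recombination $(\ell_k,Q_k)_k\mapsto(\sum_l c_{lk}\ell_l,\ \sum_l(c^{-1})_{lk}Q_l)_k$ for invertible $(c_{lk})$, and the null combinations in the column pattern — so that ``equivalent to the Laplace expansion'' becomes a precise equivalence relation rather than a slogan. I expect that latter bookkeeping, carried out so that \emph{every} $n$-term expansion is captured, to be the crux.
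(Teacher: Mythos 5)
Your argument, as written, proves a weaker statement than the paper's. You restrict at the outset to expansions $\det_n=\sum_k\ell_k Q_k$ in which each $\ell_k$ is linear in a single row $x_{i_k}$. Theorem~\ref{thm:slice-unique} is stated for the unrestricted slice rank, where each $\alpha_i$ may be an arbitrary linear form in all $n^2$ entries and each $g_i$ an arbitrary form of degree $n-1$ (the footnote in Section~2 flags this deliberately). Your parenthetical remark --- that the ``unstructured'' variant follows from Flanders' theorem --- covers only the lower bound $r\ge n$. It does not extend your rigidity argument, which is phrased entirely in the row-wise language of the subspaces $V_i\subseteq\F^n$, the row-multiplicities $\rho_i$, and Rado's transversal theorem, none of which make sense once the $\alpha_i$ mix rows. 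The paper closes exactly this gap with the equality case of Meshulam's theorem (Proposition~\ref{thm:Meshulam}): if $\mathcal{M}=\{x:\alpha_1(x)=\cdots=\alpha_n(x)=0\}$ has dimension $(n-1)n$ and consists of singular matrices, it must equal $E\otimes\F^n$ or its transpose for some hyperplane $E$, which after an $\textnormal{SL}_n$ change of basis and possibly a transposition forces the $\alpha_i$ to span the dual of one fixed row. Flanders alone gives the dimension bound but not this structure, so without Meshulam (or an ad hoc replacement) your rigidity step does not apply to a general $n$-term expansion.

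The second gap is the one you flag yourself: ``equivalent to the Laplace expansion'' is never given a precise meaning, and you leave the post-normalization bookkeeping --- recombination of slices, the null combinations in the column pattern, and so on --- as future work. The paper resolves this by first defining the equivalence (Definitions~\ref{reductions}--\ref{srk-eq}) as the symmetrization of a preorder generated by linear, syzygy, and symmetric reductions, and then proving the key Lemma~\ref{lem:srk-span}: if $(\vec{\alpha},\vec{g})$ and $(\vec{\beta},\vec{h})$ both decompose $f$, the $\beta_j$ are linearly independent, and $\Sp(\vec{\alpha})\subseteq\Sp(\vec{\beta})$, then $(\vec{\beta},\vec{h})\lesssim(\vec{\alpha},\vec{g})$. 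The engine of that lemma is exactly the syzygy you allude to --- if $\sum_j\beta_jt_j=0$ with independent $\beta_j$, then $t_j=\sum_{i\neq j}q_{ij}\beta_i$ with $q_{ij}=-q_{ji}$ --- but it has to be proved, which the paper does by induction on $r$. After the Meshulam normalization, Corollary~\ref{srk-equiv} then finishes the rigidity at once, with no case analysis on $|S|$ or on row versus column patterns. So your combinatorics gives a correct and pleasantly different route to the multilinear lower bound, but the rigidity part has a genuine hole (general linear forms are not handled) and an acknowledged one (the equivalence relation is not pinned down).
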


Since the determinant is a \emph{multilinear form}---meaning a homogeneous polynomial that is linear in each of its vector variables\footnote{In the case of the determinant, the rows of the matrix (or, alternatively, the columns).}---it is natural to require that the terms of the expansion are multilinear as well. For a multilinear form $T$, 
the \emph{partition rank}~\cite{Naslund20} $\prk(T)$ is the least number of summands when we require each of the factors to be multilinear. Our results for partition rank of the determinant are as follows:

\begin{main}[see Theorem~\ref{thm:prk-LB}]\label{main:LB}
    For every $n\ge 2$, $\prk(\det_n) \ge \log_2(n)+1$.
\end{main}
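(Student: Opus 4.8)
The plan is to prove the equivalent statement: if $\prk(\det_n)=r$ then $n\le 2^{r-1}$, by induction on $r$. For $r=1$ this is the classical irreducibility of $\det_n$: a partition‑rank‑$1$ decomposition would be a factorization $\det_n=f\cdot g$ into nonconstant multilinear forms in complementary nonempty sets of rows, which is impossible for $n\ge 2$, so $r=1$ forces $n=1$. For the inductive step, start from a minimal decomposition $\det_n=\sum_{i=1}^r f_ig_i$ where term $i$ carries a nontrivial bipartition $(S_i,T_i)$ of the row set $[n]$, and pass to two strictly smaller determinants. The key point is that fixing the rows on one side of a bipartition to constant vectors turns $\det_n$ into a smaller determinant: substituting $x^{(j)}=v_j$ for $j\in T_{i_0}$ with the $v_j$ linearly independent, then completing $(v_j)$ to a basis and applying the induced common change of variables (which only rescales $\det_n$), the polynomial $\det_n$ becomes $\det_{|S_{i_0}|}$ in the surviving rows $S_{i_0}$, and the right‑hand side becomes a decomposition of $\det_{|S_{i_0}|}$. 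If moreover $g_{i_0}\big((v_j)_{j\in T_{i_0}}\big)=0$, the $i_0$‑th term dies, giving $\prk(\det_{|S_{i_0}|})\le r-1$, and symmetrically (fixing the rows in $S_{i_0}$) $\prk(\det_{|T_{i_0}|})\le r-1$. The inductive hypothesis then yields $n=|S_{i_0}|+|T_{i_0}|\le 2^{r-2}+2^{r-2}=2^{r-1}$. The factor‑of‑two loss in this splitting is precisely why the method cannot beat a logarithmic bound.

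The first thing to check is that a linearly independent annihilating tuple exists; I would isolate this as a lemma: a nonzero multilinear form $g$ in $t$ vector variables in $k^n$ with $t\le n-1$ vanishes at some linearly independent $t$‑tuple. To see it, first pick linearly independent $v_1,\dots,v_{t-1}$ for which $\ell:=g(v_1,\dots,v_{t-1},\cdot)$ is a nonzero linear form — this is a short induction on $t$ by expanding $g$ in its last slot and using that a vector space is never a union of two proper subspaces — and then, since $\ker\ell$ has dimension $n-1$ while $\Sp(v_1,\dots,v_{t-1})$ has dimension $t-1\le n-2$, choose $v_t\in\ker\ell$ outside that span. In our situation $|T_{i_0}|\le n-1$ automatically because $S_{i_0}\neq\emptyset$, so this applies; a short dimension count upgrades it to annihilating several multilinear forms in the variables $(v_j)_{j\in T_{i_0}}$ simultaneously, since the locus of linearly dependent $|T_{i_0}|$‑tuples has codimension $|S_{i_0}|+1$ in $(k^n)^{|T_{i_0}|}$.

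The hard part — and the only delicate point — is that the restricted decomposition need not stay an honest partition‑rank decomposition of the smaller determinant. Upon fixing the rows in $T_{i_0}$, a surviving term $f_ig_i$ "collapses" into a scalar times a single multilinear form in all of $S_{i_0}$ exactly when $(S_i,T_i)$ does not split $S_{i_0}$ (i.e. $S_i\subseteq T_{i_0}$ or $T_i\subseteq T_{i_0}$), and such a term contributes uncontrollably to $\prk(\det_{|S_{i_0}|})$. To recover a genuine length‑$(r-1)$ decomposition one must choose $i_0$ and $(v_j)_{j\in T_{i_0}}$ so that every non‑splitting term is also killed. My plan is to take $i_0$ with $T_{i_0}$ inclusion‑minimal among all the parts $\{S_i,T_i\}_i$; then the only non‑splitting terms are precisely those whose bipartition equals $(S_{i_0},T_{i_0})$, and after rewriting that block of the decomposition in reduced form (which does not increase, hence preserves, the length) one annihilates all of them at once via the dimension‑count version of the lemma. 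The remaining case, where the number of terms sharing the bipartition $(S_{i_0},T_{i_0})$ is large, needs a separate argument; when $T_{i_0}$ is a singleton it is clean (it reduces to the slice‑rank bound $\srk(\det_n)=n$), and I expect the general instance of this case to be the main place where an extra idea is required. Carrying out this collapsing analysis carefully, and doing the symmetric bookkeeping for the $S_{i_0}$‑side reduction, is the real content of the proof.
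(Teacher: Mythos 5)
Your key idea---choosing $T_{i_0}$ inclusion-minimal and fixing those rows to a linearly independent tuple that simultaneously annihilates every term sharing the bipartition $(S_{i_0},T_{i_0})$---is exactly the paper's argument, and your annihilation lemma is the paper's Lemma~\ref{lemma:ind-zero}. But your chosen induction, on $r$ via the \emph{pair} of reductions $\prk(\det_{|S_{i_0}|})\le r-1$ and $\prk(\det_{|T_{i_0}|})\le r-1$ followed by adding $|S_{i_0}|+|T_{i_0}|=n$, has a gap that is not merely "symmetric bookkeeping": the $S_{i_0}$-side reduction genuinely fails. Inclusion-minimality of $T_{i_0}$ says nothing about $S_{i_0}$. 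A different term $j$ can easily have $T_j\subsetneq S_{i_0}$ (for instance $T_{i_0}=\{1\}$, $S_{i_0}=\{2,\dots,n\}$, $T_j=\{2\}$); when you fix the rows of $S_{i_0}$, such a term $j$ collapses to a single multilinear form on all of $T_{i_0}$, so it is a non-splitting term that is \emph{not} in the block you planned to annihilate. You would have to kill $g_j$ as well, and since $|S_{i_0}|$ can be close to $n$ and there can be many such $j$'s, the hypothesis $|S_{i_0}|+(\text{number of forms})\le n$ of the annihilation lemma fails. There is simply no reason for $S_{i_0}$ and $T_{i_0}$ to \emph{both} be inclusion-minimal, so the second half of the step does not go through.

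The paper sidesteps this by doing only one reduction and inducting on $n$ rather than $r$: choose $T_{i_0}$ to be a part of smallest cardinality (hence inclusion-minimal \emph{and} of size $\le n/2$, with a tiebreaker when $|T_{i_0}|=n/2$ to forbid a complementary part from also being minimal), show $\prk(\det_n)>\prk(\det_{|S_{i_0}|})$ with $|S_{i_0}|\ge n/2$, and iterate at most $\lg n$ times. Your other flagged gap---too many terms sharing the bipartition for the annihilation lemma---is in fact easy to close: if $|T_{i_0}|+\ell>n$ then $\ell>|S_{i_0}|$, and since $\prk(\det_m)\le m$ always (Laplace), one gets $\prk(\det_{|S_{i_0}|})\le |S_{i_0}|<\ell\le r=\prk(\det_n)$, which is the same strict drop. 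So: keep your annihilation step, drop the symmetric reduction, add the size condition to your minimality criterion, and induct on $n$; then your argument becomes the paper's.
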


\begin{main}[see Theorem~\ref{prop:prk-4}]\label{main:prk-4}
    It is \emph{not} true that $\prk(\det_n)=n$, or that $\prk(\det_n)=\srk(\det_n)$.
    Namely, $\prk(\det_4)=3$ over any field.
\end{main}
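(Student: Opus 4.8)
The plan is to reduce the whole statement to the single inequality $\prk(\det_4)\le 3$ and then prove that by exhibiting an explicit three-term decomposition valid over any field. The reduction is immediate: Theorem~\ref{thm:prk-LB} already gives $\prk(\det_4)\ge\log_2(4)+1=3$, so an upper bound of $3$ forces $\prk(\det_4)=3$; and since Theorem~\ref{thm:slice-unique} gives $\srk(\det_4)=4$, we obtain $\prk(\det_4)=3\neq 4=\srk(\det_4)$, exhibiting $n=4$ as a counterexample to both $\prk(\det_n)=n$ and $\prk(\det_n)=\srk(\det_n)$. (Note $\prk\le\srk$ always, since a slice-rank decomposition is in particular a partition-rank one; so a priori $\prk(\det_4)\in\{3,4\}$ and the only thing to decide is which.)

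So the task is to produce a partition-rank decomposition of $\det_4$ with three summands. I would look for one adapted to the three complementary-pair partitions $\{1,2\}\mid\{3,4\}$, $\{1,3\}\mid\{2,4\}$, $\{1,4\}\mid\{2,3\}$ of the four row variables $r_1,\dots,r_4$, with each summand a product of two bilinear forms. Writing $\omega(a,b)=a_1b_2-a_2b_1+a_3b_4-a_4b_3$ for the standard symplectic form, the identity I would use is
\[
\det_4 \;=\; \omega(r_1,r_2)\,\omega(r_3,r_4)\;-\;\omega(r_1,r_3)\,\omega(r_2,r_4)\;+\;\omega(r_1,r_4)\,\omega(r_2,r_3).
\]
Its right-hand side is visibly a sum of three products of multilinear forms on complementary subsets of $\{r_1,r_2,r_3,r_4\}$, i.e.\ a partition-rank decomposition with three summands; so this establishes $\prk(\det_4)\le 3$ and hence the theorem.

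To prove the identity I would invoke the classical fact that the square of a nondegenerate $2$-form is twice the volume form: with $\omega=dx_1\wedge dx_2+dx_3\wedge dx_4$ one has $\omega\wedge\omega=2\,dx_1\wedge dx_2\wedge dx_3\wedge dx_4$, whose value on $(r_1,r_2,r_3,r_4)$ is $2\det_4$, while the $(2,2)$-shuffle expansion of the wedge product gives $(\omega\wedge\omega)(r_1,r_2,r_3,r_4)=2\bigl(\omega(r_1,r_2)\omega(r_3,r_4)-\omega(r_1,r_3)\omega(r_2,r_4)+\omega(r_1,r_4)\omega(r_2,r_3)\bigr)$. Both sides are polynomials with integer coefficients, so the factor $2$ can be cancelled over the integers, and the identity then holds over every field --- including in characteristic $2$, where it specializes to the relation underlying the Green--Tao and Lovett--Meshulam--Samorodnitsky example. (As a back-up I would just expand both sides in monomials and compare; $S_4$-equivariance reduces this to a short check.)

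I do not expect any analytic difficulty: the content of the theorem is the \emph{discovery} that $\prk(\det_4)$ drops below $n=4$ at all and that the symplectic square is what realizes the drop. The one place I would be careful is the reduction to positive characteristic: because the derivation introduces and then removes a factor of $2$, I would perform the cancellation over the integers before reducing modulo any prime, so that the three-summand decomposition is genuinely valid over \emph{any} field as claimed. With $\prk(\det_4)\le 3$ in hand, the theorem follows by combining it with Theorems~\ref{thm:prk-LB} and~\ref{thm:slice-unique} as in the first paragraph.
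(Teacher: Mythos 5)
Your proposal is correct, and the overall strategy is the same as the paper's: combine the lower bound $\prk(\det_4)\ge 3$ from Theorem~\ref{thm:prk-LB} with an explicit three-term partition-rank decomposition, and note $\srk(\det_4)=4$ from Theorem~\ref{thm:slice-unique}. However, your explicit decomposition and its verification differ in an interesting way from the paper's. The paper uses the alternating form $\beta(a,b)=\sum_{i<j}(a_i b_j - a_j b_i)$ (so each factor is the sum of \emph{all six} $2\times 2$ minors), and proves the resulting identity by reducing to the ``$4$-to-$2$'' Levi-Civita identity $\e_{i,j,k,\ell}=\e_{i,j}\e_{k,\ell}-\e_{i,k}\e_{j,\ell}+\e_{i,\ell}\e_{j,k}$, which it checks by a short symmetry argument. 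You instead use the standard symplectic form $\omega(a,b)=a_1b_2-a_2b_1+a_3b_4-a_4b_3$ (each factor is a sum of \emph{two} $2\times 2$ minors), derive the identity from $\omega\wedge\omega=2\,\det_4$ via the $(2,2)$-shuffle expansion, and cancel the factor of $2$ over $\mathbb{Z}$ before specializing to a field. Both identities are in fact the same Pfaffian identity $\omega\wedge\omega=2\operatorname{Pf}(\omega)\det$ applied to two different unit-Pfaffian alternating forms, so the two decompositions are related by an $\mathrm{SL}_4$ change of variables. The paper's route has the minor advantage of being division-free from the start; yours is more conceptual and makes the classical mechanism (the symplectic square realizing the volume form) visible, at the small cost of the extra ``cancel 2 over $\mathbb{Z}$'' step, which you correctly flag and handle.
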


We mention that the partition rank expansion of $\det_4$ in Theorem~\ref{main:prk-4} is genuinely different from the Laplace expansion or the generalized Laplace expansion (see Lemma~\ref{not-Laplace}).


Note that we consider expressions of the determinant as, simply, a sum of products, each with at least two factors.
For comparison, in (multilinear) arithmetic formulas or circuits, each factor in the expansion is also given an expansion, as well as each factor in those expansions and so on.
In this context, what we are interested in is only the top fan-in, which is trivially at most $n$.\footnote{An extreme case is where \emph{all} factors in the expansions are required to be linear (this corresponds to the traditional notion of tensor rank), for which an exponential lower bound of $\binom{n}{\floor{n/2}}$ is known for the number of summands~\cite{Derksen16}.}


Our proof strategy for Theorem~\ref{main:LB} (and similar proof strategies used to bound partition rank in the past) cannot yield a lower bound on $\prk(\det_n)$ better than logarithmic (see Remark~\ref{remark:strategy}).
It remains open whether a different strategy could do better.

\begin{problem}
    Determine the asymptotics of $\prk(\det_n)$.
\end{problem}

One can also ask about expansions of the determinant where the factors need not be multilinear. This corresponds to the \emph{strength} (or \emph{rank}) of $\det_n$, where for any form $P$, $\str(P)$ is the least number of reducible forms that sum to $P$.
For this notion, even obtaining \emph{any} nontrivial bound is open.

\begin{conjecture}
    $\lim_{n \to \infty} \str(\det_n) = \infty$.
\end{conjecture}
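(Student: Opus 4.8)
We are far from a proof, and it is worth explaining the shape of the difficulty before proposing a line of attack. The two general sources of strength lower bounds are both vacuous for $\det_n$. \emph{Analytic rank}: it lower-bounds strength (up to a factor depending only on the degree), but over $\mathbb F_q$ the value $\det_n(A)$ of a uniform matrix $A$ is $0$ with probability $1-\prod_{i=1}^n(1-q^{-i})$ and uniform on $\mathbb F_q^\times$ otherwise, so the bias of $\det_n$, and hence its analytic rank, is $O(1)$ uniformly in $n$. \emph{Singular locus}: if $\det_n=\sum_{i=1}^r Q_iR_i$ then $\{Q_1=R_1=\dots=Q_r=R_r=0\}$, of codimension $\le 2r$, lies inside $\mathrm{Sing}(\{\det_n=0\})=\{\mathrm{rank}\le n-2\}$, of codimension exactly $4$; this yields only $\str(\det_n)\ge 2$. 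Nor can one pass to a hard subinstance: restricting $\det_n$ to a linear subspace only \emph{decreases} strength, and it can collapse entirely ($\det_{2k}$ restricted to block-diagonal matrices $\mathrm{diag}(B_1,\dots,B_k)$ equals $\prod_i\det_2(B_i)$, of strength $1$). In short, $\det_n$ is simple by every first-order measure --- bounded bias, bounded singular-locus codimension, bounded Hessian rank at a generic singular point --- and its complexity resides entirely in the \emph{depth} of the determinantal stratification $M_n\supsetneq\{\mathrm{rank}\le n-1\}\supsetneq\dots\supsetneq\{0\}$, a chain of $n$ strata each of which is the singular locus of its predecessor.

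The plan is to replace the one-step singular-locus bound by an iterated one that sees this depth. Set $Z_0=\{\det_n=0\}$ and $Z_{j+1}=\mathrm{Sing}(Z_j)_{\mathrm{red}}$, so $Z_j=\{\mathrm{rank}\le n-1-j\}$ has codimension $(j+1)^2$ for $0\le j\le n-1$. I would aim to prove a bound of the form
\[
  \mathrm{codim}(Z_j)\ \le\ g\bigl(j,\str(\det_n)\bigr)
\]
that is subquadratic in $j$ for each fixed second argument --- $g(j,r)=O(rj)$ already suffices, as it forces $(j+1)^2\le O(rj)$, hence $\str(\det_n)=\Omega(j)$, and letting $j=n-1$ gives $\str(\det_n)=\Omega(n)\to\infty$. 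Each descent differentiates the defining equations: $Z_1$ is cut out by the $(n-1)\times(n-1)$ minors, each a sum of at most $2\str(\det_n)$ products inherited from a strength decomposition of $\det_n$ (with some care needed because a linear factor $Q_i$ differentiates to a constant). The natural engine is then to bound the Ananyan--Hochster collective strength of the successive determinantal ideals $I_{n-1},I_{n-2},\dots$ in terms of $\str(\det_n)$, and to show that a fixed strength budget is progressively consumed along the chain. It is encouraging that the ideals $I_k$ are among the best-understood of all ideals --- perfect, with explicit minimal free resolutions --- so the target side of the desired inequality carries real structure.

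A complementary, qualitative route is the inverse-limit philosophy of Ananyan--Hochster and Kazhdan--Ziegler: if $\str(\det_n)\le r$ for all $n$, form an ultraproduct of $(\det_n)_n$ to obtain a ``universal determinant'' of strength $\le r$ in countably many variables over an ultraproduct field, on which the full infinite determinantal stratification survives, and then derive a contradiction from the fact that a strength-$r$ form cannot support an infinite descending chain of higher singular loci. This trades the explicit bound for a cleaner structural contradiction, but requires the same quantitative input.

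The main obstacle, in either guise, is exactly that input: \emph{controlling strength under passage to the singular locus}. The difficulty is structural rather than computational. Because every individual level of the determinant is simple by all local measures, a proof must isolate a genuinely recursive or super-additive phenomenon --- a reason why a strength budget of $r$ cannot be reused at all $n$ levels of the stratification. It is conceivable that no such phenomenon exists and the conjecture is false; even showing $\str(\det_n)\ge 3$ for all large $n$ would already be meaningful progress.
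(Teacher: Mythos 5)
The statement you were asked to prove is stated in the paper as a \emph{conjecture}, and the paper provides no proof; indeed, immediately before it the authors write that ``even obtaining \emph{any} nontrivial bound is open,'' and immediately after that, as far as they know, $\str(\det_n)=n$ for all $n\neq 4$. So there is no paper proof to compare against, and your proposal --- which you candidly open with ``we are far from a proof'' --- is a research program, not a proof.

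That said, your diagnostic observations are sound and consistent with the paper. The bias computation (over $\F_q$, $\det_n$ is $0$ with probability $1-\prod_{i=1}^n(1-q^{-i})$ and uniform on $\F_q^\times$ otherwise) matches the paper's Corollary~\ref{coro:ark-det}, which proves $\lceil\ark(\det_n)\rceil=2$; so analytic-rank-based lower bounds on strength are indeed stuck at a constant. Your singular-locus argument is correct: a strength-$r$ decomposition $\det_n=\sum_{i=1}^r Q_iR_i$ places the codimension-$\le 2r$ variety $\{Q_1=R_1=\cdots=Q_r=R_r=0\}$ inside $\{\mathrm{rank}\le n-2\}$, which has codimension exactly $4$, giving only $\str(\det_n)\ge 2$ (equivalently, irreducibility of $\det_n$). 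And the block-diagonal restriction collapsing $\det_{2k}$ to a strength-$1$ form correctly rules out the naive ``pass to a subinstance'' strategy. The paper's Remark~\ref{remark:strategy} makes closely related points about why standard partition-rank arguments also fail for the determinant.

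The genuine gap in your proposal is exactly the one you identify: the iterated singular-locus bound $\mathrm{codim}(Z_j)\le g(j,\str(\det_n))$ with $g$ subquadratic in $j$, which is what would convert the depth of the determinantal stratification into a growing lower bound, is not proved, and there is no known mechanism for controlling strength under passage to successive singular loci. Without that input, neither the explicit descent nor the ultraproduct reformulation yields a bound. In short: no gap in your reasoning as stated (you claim no theorem), but also no proof --- which is the expected state of affairs for an open conjecture.
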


As far as we know, it could be true that $\str(\det_n) = n$ for all $n \neq 4$!


\subsection{Structure versus randomness}

Proving lower bounds for $\prk(\det_n)$ also has an important application in the study of the structure-versus-randomness phenomenon 
for (multilinear) polynomials.
Structure is usually captured by partition rank, 
and randomness by analytic rank.
For a multilinear form $T$ over a finite field $\F$, denote $\bias(T) = \Pr[T=0]-\Pr[T=y]$ for any nonzero $y \in \F$.\footnote{The definition is independent of the choice of $y\neq 0$.}\footnote{Throughout, the probabilities are over a uniformly random choice in the relevant domain.}
The analytic rank~\cite{GowersWo11} of $T$ is $\ark(T) = -\log_{|\F|}(\bias(T))$.


Sub-additivity~\cite{KazhdanZi18,Lovett19} implies $\ark(T) \le \prk(T)$.
A central conjecture in additive combinatorics (see e.g.~\cite{Lovett19,AdiprasitoKaZi21,Janzer19,Milicevic19})---the \emph{partition-versus-analytic rank conjecture}---posits that the inverse approximately holds: $\prk(T) \le O_d(\ark(T))$ 
(that is, uniformly in $n$ and $\F$).
This conjecture is known to hold if the field is large enough~\cite{CohenMo21B,MoshkovitzZh22}.\footnote{$|\F| \ge (\ark(T)+1)^{\e(d)}$ would do. It is also known to hold up to a logarithmic factor in $\ark(T)$ over any finite field~\cite{MoshkovitzZh22}.}
The extent to which the bound must depend on $d$, however, is unclear, 
and the best lower bound we are aware of is a $3/2$ factor~\cite{CohenMo21} (see also~\cite{KoppartyMoZu20,ChenYe24} for the slightly weaker lower bound of $4/3$ witnessed by the matrix multiplication tensor).

Formally, define the extremal ratio $A(d) = \max_T \prk(T)/\lceil \ark(T) \rceil$, where the maximum is over all $d$-linear forms $T$.\footnote{One can define a version of $A(d)$ over large enough fields, which is necessarily finite by the aforementioned~\cite{CohenMo21B} (even if the partition-versus-analytic rank conjecture turned out to be false). The lower bound on $A(d)$ we prove in this paper indeed holds over any finite field.}\footnote{The ceiling avoids the rather degenerate case where $\prk(T)=1$ but the denominator $\ark(T)$, which need not be an integer, is close to $0$ (when $T$ is a product of many factors).}
Linear algebra tell us that $A(2)=1$ (that is, $\prk(T)=\ark(T)$ for any linear map/bilinear form $T$), 
and as mentioned above, it is known that $A(3) \ge \frac32$. (Note also that $A(d)$ is monotone increasing).

It seems that separating partition rank from analytic 
rank---that is,
proving the existence of a multilinear form that has high partition rank but low analytic rank---is a nontrivial challenge. 
In particular, for almost all $d$-linear forms $T \colon (\F^n)^d \to \F$, both $\prk(T)$ and $\ark(T)$ are essentially $n$. 
However, what if we condition $T$ to have a given partition rank?
We show that one cannot obtain a separation in this way either.
\begin{main}[see Theorem~\ref{thm:random-ark}]
    A random $d$-linear form of partition rank $r$ has, with high probability, analytic rank roughly $r$.
\end{main}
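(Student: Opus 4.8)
The plan is to use the natural model for a random $d$-linear form of partition rank $r$: fix proper nonempty subsets $S_1,\dots,S_r\subsetneq[d]$ (balanced, or themselves random --- it will not matter) and set $T=\sum_{i=1}^r f_i\cdot g_i$, where each $f_i$ is a uniformly random multilinear form in the vector variables indexed by $S_i$, each $g_i$ is a uniformly random multilinear form in the variables indexed by $[d]\sm S_i$, and the $2r$ forms are independent. By construction $\prk(T)\le r$ (and $\prk(T)=r$ with high probability, so conditioning on this event changes nothing), hence sub-additivity gives $\ark(T)\le\prk(T)\le r$ for free, and the whole task reduces to showing that, with high probability, $\bias(T)\le|\F|^{-r+o(1)}$. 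Write $q=|\F|$ and fix a nontrivial additive character $\psi$; since $T$ is multilinear, rescaling a single vector variable shows $\mathbb{E}_x\psi(T(x))$ is independent of $\psi$, and this common value equals $\bias(T)$. The one elementary input is that a product of two independent uniform elements of $\F$ has ``bias $1/q$'': $\mathbb{E}_{a,b\in\F}\,\psi(ab)=1/q$.

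First I would compute $\mathbb{E}_T[\bias(T)]=\mathbb{E}_x\mathbb{E}_T\psi(T(x))$. Condition on an $x=(x_1,\dots,x_d)$ all of whose coordinate vectors are nonzero (which fails with probability at most $d q^{-n}$). For such $x$ the $2r$ scalars $f_i(x_{S_i}),\,g_i(x_{[d]\sm S_i})$ are independent and uniform over $\F$ --- independence is built into the model, and uniformity of $f_i(x_{S_i})$ holds because the rank-one tensor $\bigotimes_{j\in S_i}x_j$ is nonzero --- so $T(x)$ is a sum of $r$ independent products of uniform field elements and $\mathbb{E}_T\psi(T(x))=q^{-r}$ by the displayed identity. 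Bounding the exceptional $x$ by $1$ yields $q^{-r}\le\mathbb{E}_T[\bias(T)]\le q^{-r}+d q^{-n}$. To upgrade this to a high-probability statement I would use that $\bias(T)$ is not an arbitrary nonnegative random variable: since $\prk(T)\le r$ always, $\bias(T)\ge q^{-r}$ always. Hence $X:=\bias(T)-q^{-r}\ge 0$ has $\mathbb{E}[X]\le d q^{-n}$, and Markov gives, for any $\e>0$, that $\bias(T)\le q^{-r+\e}$ fails with probability at most $d q^{r-n}/(q^{\e}-1)$, which tends to $0$ (even with $\e\to 0$ slowly) whenever $r\le n-\omega(1)$. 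This already proves $\ark(T)=r-o(1)$ --- and therefore, by integrality, $\prk(T)=r$ --- with high probability in that range.

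For larger $r$, up to the maximal possible partition rank of a $d$-linear form (which is polynomial in $n$), the Markov step is too weak and I would instead estimate the second moment and apply Chebyshev. Expanding, $\mathbb{E}_T[\bias(T)^2]=\mathbb{E}_{x,x'}\prod_{i=1}^r c_i(x,x')$, where $c_i(x,x')=\mathbb{E}_{f_i,g_i}\psi\big(f_i(x_{S_i})g_i(x_{[d]\sm S_i})-f_i(x'_{S_i})g_i(x'_{[d]\sm S_i})\big)$ satisfies $|c_i|\le 1$ and $c_i=q^{-2}$ as soon as the pair $(x,x')$ is in general position for block $i$, i.e.\ $\bigotimes_{j\in S_i}x_j$ and $\bigotimes_{j\in S_i}x'_j$ are linearly independent (and likewise for the complementary block). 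The step requiring the most care --- and the one I expect to be the main obstacle --- is bounding the contribution of the non-generic pairs: one must show that the probability that some $\bigotimes_{j\in S_i}x_j$ is proportional to $\bigotimes_{j\in S_i}x'_j$ (or that a relevant tensor vanishes) is $q^{-\Omega(n)}$ --- such a degeneracy forces $x_j$ and $x'_j$ to lie on a common line for all $j$ in that block --- and then sum these estimates over the $r$ blocks and the possible degeneracy patterns to conclude $\mathbb{E}_T[\bias(T)^2]\le(1+o(1))q^{-2r}$, so that the variance of $\bias(T)$ is $o(q^{-2r})$ and Chebyshev finishes the proof. The worst case for these estimates is an unbalanced split with some $|S_i|=1$, precisely the regime already covered by the elementary Markov bound, so this is really combinatorial bookkeeping about rank-one tensors rather than anything intrinsic to multilinear forms.
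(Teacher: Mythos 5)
Your first-moment argument is correct, and it takes a genuinely different route from the paper's. The paper (Theorem~\ref{thm:random-ark}) computes $\E_T\bias(T)$ by passing to the gradient characterization $\bias(T)=\Pr_x[\nabla T(x)=0]$, writes $\nabla T(x)=\sum_i S_i(x)\nabla R_i(x)$, and introduces three events: $A$ (gradient vanishes), $B$ (all $S_i(x)=0$), and $E$ (the vectors $\nabla R_i(x)$ are linearly dependent). It shows $A\sm E=B\sm E$, estimates $\Pr[B]\sim q^{-r}$ by Lemma~\ref{lemma:nontrivial-pt}, and bounds $\Pr[E]\lesssim q^{-(n-r+1)}$ by the rank-deficiency estimate of Lemma~\ref{lemma:rank-deficient}. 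That last error term forces the hypothesis $r\le(1-\e)n/2$, since one needs $q^{-(n-2r+1)}=o(q^{-r})$. You instead compute $\E_T\bias(T)=\E_x\E_T\psi(T(x))$ directly by the character sum; for nontrivial $x$ the $2r$ scalars are independent uniform, so $\E_T\psi(T(x))=(\E_{a,b}\psi(ab))^r=q^{-r}$, and the only error comes from the event that some $x^{(j)}=0$, of probability $\le dq^{-n}$. This is simpler, avoids Lemma~\ref{lemma:rank-deficient} entirely, and yields $\E_T\bias(T)\sim q^{-r}$ for all $r\le n-\omega(1)$ rather than only $r\le(1-\e)n/2$. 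The other improvement is in the concentration step: the paper applies Markov to $\bias(T)\ge 0$ directly, giving failure probability $\sim q^{-c}$ for $\ark(T)\ge r-c$; you observe that sub-additivity of analytic rank forces the deterministic lower bound $\bias(T)\ge q^{-r}$, so you may apply Markov to $\bias(T)-q^{-r}\ge 0$, whose mean is only $O(dq^{-n})$. This gives an exponentially small failure probability and lets you take $c\to 0$, which is strictly sharper.

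The one thing to trim is the entire second-moment discussion at the end, which is based on a false premise: the partition rank of any $d$-linear form on $(\F^n)^d$ is at most $n$ (since $\prk\le\srk\le n$ by slicing off a single vector variable), not ``polynomial in $n$.'' So the range $r\le n-\omega(1)$ already covered by your Markov step is essentially the whole nontrivial range, and the Chebyshev machinery would buy you nothing. No gap in what you did prove --- the extra plan is just unnecessary.
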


In the context of separating structure from randomness, the importance of obtaining lower bounds for $\prk(\det_n)$ stems from the observation (see Corollary~\ref{coro:ark-det} below) that $\lceil \ark(\det_n) \rceil =2$ for any $n \ge 2$.
Therefore, any lower bound on $\prk(\det_n)$ would yield a corresponding lower bound on $A(d)$, namely $A(d) \ge \prk(\det_d)/2$. 
We therefore obtain from Theorem~\ref{main:LB} the following separation.
\begin{corollary}[See Corollary~\ref{coro:Cd}]
    $A(d) \ge (\log_2(d)+1)/2$, 
    and in particular, $A(d)$ is unbounded.
\end{corollary}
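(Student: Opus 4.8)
The plan is to derive the corollary immediately by combining Theorem~\ref{main:LB} with the evaluation of the analytic rank of the determinant recorded in Corollary~\ref{coro:ark-det}. First I would observe that $\det_d$ is a legitimate witness in the maximum defining $A(d)$: viewing a $d\times d$ matrix as a tuple of row vectors $(v_1,\dots,v_d)\in(\F^d)^d$, the assignment $(v_1,\dots,v_d)\mapsto\det(v_1,\dots,v_d)$ is linear in each $v_i$, so $\det_d$ is a $d$-linear form. Hence by definition of $A(d)=\max_T \prk(T)/\lceil\ark(T)\rceil$ we get
\[
A(d)\;\ge\;\frac{\prk(\det_d)}{\lceil\ark(\det_d)\rceil}.
\]
Plugging in $\lceil\ark(\det_d)\rceil=2$ from Corollary~\ref{coro:ark-det} and $\prk(\det_d)\ge\log_2(d)+1$ from Theorem~\ref{main:LB} yields $A(d)\ge(\log_2(d)+1)/2$, and since $\log_2(d)\to\infty$ this shows $A(d)$ is unbounded. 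The monotonicity of $A(d)$ noted in the text is not needed for this conclusion.

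For completeness I would also recall the short argument behind $\lceil\ark(\det_n)\rceil=2$, as this is the only genuine computation feeding the corollary. Fix a finite field $\F$ with $q=|\F|$, and let $p_0=\Pr[\det_n=0]$ for a uniformly random $n\times n$ matrix; the standard count of nonsingular matrices gives $p_0=1-\prod_{i=1}^n(1-q^{-i})$. Multiplying the first row by a fixed nonzero scalar is a bijection between matrices of determinant $y$ and of determinant $cy$, so all nonzero determinant values are equiprobable, whence $\Pr[\det_n=y]=(1-p_0)/(q-1)$ for $y\neq0$ and therefore
\[
\bias(\det_n)=p_0-\frac{1-p_0}{q-1}=\frac{q\,p_0-1}{q-1}.
\]
Using the bounds on $p_0$ coming from $n\ge2$ (lower bound $1-(1-q^{-1})(1-q^{-2})$) and from monotonicity of $p_0$ in $n$ (upper bound $1-\prod_{i\ge1}(1-q^{-i})$), one checks $q^{-2}\le\bias(\det_n)<q^{-1}$ for all $n\ge2$; taking $-\log_q$ gives $1<\ark(\det_n)\le2$, hence the ceiling equals $2$. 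One also sees that $\ark(\det_2)=2$ exactly, since there $\bias(\det_2)=q^{-2}$.

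There is no substantive obstacle here: once Theorem~\ref{main:LB} and Corollary~\ref{coro:ark-det} are in hand, the corollary is one line. The only place that requires a bit of care is the uniform bias estimate of the previous paragraph, and in particular confirming that $\bias(\det_n)$ stays inside the window $[q^{-2},q^{-1})$ for the smallest field $q=2$ (where the crude tail bound $\prod_{i\ge1}(1-q^{-i})\ge1-\sum_{i\ge1}q^{-i}$ is too weak and one must use the actual value of the infinite product) and at the extreme case $n=2$; both are finite checks.
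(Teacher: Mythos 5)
Your derivation of the corollary itself is word-for-word the paper's: $\det_d$ is a $d$-linear form, so $A(d)\ge \prk(\det_d)/\lceil\ark(\det_d)\rceil$, and plugging in Theorem~\ref{main:LB} and Corollary~\ref{coro:ark-det} gives the bound. That part is correct and essentially identical.

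The ``for completeness'' re-derivation of $\lceil\ark(\det_n)\rceil=2$ is where you diverge from the paper, and your alternative is also correct. The paper computes $\bias(\det_n)$ via the Fourier-analytic identity $\bias(T)=\Pr[\nabla T=0]$, reducing to the probability that a random $(n-1)\times n$ matrix has deficient rank (Lemma~\ref{lemma:rank-deficient}). You instead compute $\bias(\det_n)$ directly from its definition, using the standard count $\Pr[\det_n=0]=1-\prod_{i=1}^n(1-q^{-i})$ and a row-scaling bijection to show all nonzero values are equiprobable, yielding $\bias=(qp_0-1)/(q-1)$. Both give the same window $q^{-2}\le\bias<q^{-1}$; your route avoids invoking the gradient identity but requires a sharper estimate of $\prod_{i\ge1}(1-q^{-i})$ when $q=2$, whereas the paper's uniform bound $c_q=q/(q-1)$ in Lemma~\ref{lemma:rank-deficient} handles all $q$ without case analysis. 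You correctly flag the $q=2$ subtlety and the boundary case $n=2$, so the argument is sound.
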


It follows that a dependence on $d$ in the structure-versus-randomness question is inevitable. Tensors whose partition and analytic rank are far apart are provably rare, by Theorem~\ref{thm:random-ark}.
Explicitly, we now know of one such example, the determinant, by Theorem~\ref{thm:prk-LB}. It would be interesting to find additional explicit constructions of such tensors.%

Let us remark that randomness can also be defined geometrically, over any field $\F$, using \emph{geometric rank}~\cite{KoppartyMoZu20}. 
For a $d$-linear form $T=T(x^{(1)},\ldots,x^{(d)})$, denote by $\nabla T(x^{(1)},\ldots,x^{(d-1)})=(\partial T/\partial x^{(d)}_i)_i$ the gradient of $T$ with respect to the last (say) vector variable.
Over finite fields, the gradient is intimately connected to the analytic rank; indeed, a simple Fourier-analytic argument (e.g.~\cite{Lovett19}) shows that $\bias(T) = \Pr[\nabla T = 0]$.
This motivates the definition of geometric rank, which is the geometric analogue of analytic rank. Formally, $\grk(T) = \codim\{x \vert \nabla T(x)=0\}$, the codimension of the algebraic variety (over the algebraic closure of $\F$) cut out by the gradient.
It is known that $\grk(T)=\Theta_d(\ark(T))$ over any finite field~\cite{ChenYe24,MoshkovitzZh24},
and that $\grk(T)=\Theta_d(\prk(T))$ over any algebraically closed field~\cite{KazhdanLaPo24,Schmidt85}.
We can now define a \emph{geometric} analogue to $A(d)$, namely 
$G(d) = \max_T \prk(T)/\grk(T)$, where the maximum is over all $d$-linear forms.
One can verify that all our results about separating structure and randomness translate, with some effort, to the geometric setting, in which analytic rank is replaced by geometric rank.
In particular, $G(d) \ge \frac{\log_2(d)+1}{2}.$


It remains open to obtain a super-logarithmic separation between structure and randomness, either using an explicit construction or not, and either in the finite field setting or in the geometric setting.
\begin{problem}
    Prove or disprove: $A(d) =\omega(\log d)$. Similarly for $G(d)$.
\end{problem}

\subsection{Acknowledgments} AL was supported by NSF Award DMS-2402041. He is also grateful to the City University of New York for its hospitality during the period when this project was initiated. 
GM was supported by NSF Award DMS-2302988 and a PSC-CUNY award.

\section{Slice rank of the determinant}

\renewcommand{\a}{\vec{\alpha}}
\newcommand{\g}{\vec{g}}

We begin this section with the definition of slice rank:

\begin{definition}
    If $f$ is a form (i.e. homogeneous polynomial) of degree $d$ then its \emph{slice rank}, written $\srk(f),$ is the minimal $r$ such that $f = \sum_{i=1}^r \alpha_i g_i,$ where $\alpha_i$ are linear forms and $g_i$ are forms of degree $d-1.$\footnote{Slice rank is usually defined for multilinear forms and requires the decomposition to respect the multilinear structure. Our less restrictive definition means that our characterization of slice rank decompositions of the determinant holds in greater generality.}
\end{definition}
In this section we will characterize the slice rank decompositions of the determinant.  We will prove that the slice rank of $\det_n$ is $n$, and show that any slice rank decomposition of length $n$ is equivalent to the Laplace expansion, in a sense we will soon make precise.

\subsection{Equivalence for slice rank decompositions}  

We now turn  to describing some natural reductions for slice rank decompositions. See Karam~\cite{Karam24} for a more detailed discussion. Let $f$ be a form of degree $d$, and let $f = \sum_{i=1}^r \alpha_i g_i$ be some (initial) slice rank decomposition. We will denote this decomposition by $(\vec{\alpha}, \vec{g}).$

\begin{definition}\label{reductions}
    We define a preorder $\lesssim$ on the collection of slice rank decompositions of $f$ by taking the transitive closure of the following rules:
    \begin{enumerate}
        \item \textbf{Linear reduction:} Suppose that $\beta_1,\ldots,\beta_r$ are linear forms such that $\alpha_1,\ldots,\alpha_r\in \Sp(\beta_1,\ldots,\beta_r) .$ 

Writing  $\alpha_i  = \sum_{j=1}^r c_{i,j}\beta_j,$ we get:
\[
f = \sum_{i=1}^r \alpha_i g_i =  \sum_{i=1}^r \sum_{j=1}^r c_{i,j}\beta_j g_i = \sum_{j=1}^r \beta_j \sum_{i=1}^r c_{i,j} g_i.  
\]
Thus, $(\vec{\beta},\vec{h})$ is another slice rank decomposition, where $h_j = \sum_{i=1}^r c_{i,j}g_i.$ We declare $(\vec{\beta},\vec{h}) \lesssim (\vec{\alpha}, \vec{g}).$
\item \textbf{Syzygy reduction:} If $h_i = g_i-\sum_{j\neq i} q_{i,j}\alpha_j$ for an alternating matrix  $(q_{i,j})_{i,j}$ of forms of degree $d-2$ (i.e. $q_{i,i} = 0$ and $q_{i,j} = -q_{j,i}$), then
\[
f = \sum_{i=1}^r \alpha_i g_i  = \sum_{i=1}^r \alpha_i h_i + \sum_{1\le i<j \le r} \alpha_i\alpha_j (q_{i,j}+q_{j,i}) = \sum_{i=1}^r \alpha_i h_i. 
\]
Therefore, $(\vec{\alpha}, \vec{h})$ is another slice rank decomposition with $(\vec{\alpha}, \vec{h}) \lesssim (\vec{\alpha}, \vec{g}).$ 
\item \textbf{Symmetric reduction:} If $A$ is a linear map such that  $f\circ A = f$,\footnote{For example, if $f=\det_n$ then matrix transposition is one example of such a linear map (that is, $f$ is an invariant under matrix transposition).} then $(\vec{\alpha}\circ A, \vec{g}\circ A)$ is also a slice rank decomposition with
$(\vec{\alpha}\circ A, \vec{g}\circ A)\lesssim (\vec{\alpha}, \vec{g}).$
    \end{enumerate}
\end{definition}

\begin{definition}\label{srk-eq}
    Two slice rank decompositions $(\vec{\alpha}, \vec{g})$ and $(\vec{\beta}, \vec{h})$ are \emph{equivalent} if both $(\vec{\beta}, \vec{h}) \lesssim (\vec{\alpha}, \vec{g})$ and $(\vec{\alpha}, \vec{g})\lesssim (\vec{\beta}, \vec{h})$.
\end{definition}

Our goal here is to show that, up to our equivalence defined above, slice rank decompositions are determined by their (linearly independent) linear factors.

\begin{lemma}\label{lem:srk-span}
    Suppose that $(\vec{\alpha}, \vec{g})$ and $(\vec{\beta}, \vec{h})$ are two slice rank decompositions of $f.$ If $\beta_1,\ldots,\beta_r$ are linearly independent and their linear span contains $\alpha_1,\ldots,\alpha_r,$ then $(\vec{\beta}, \vec{h})\lesssim (\vec{\alpha}, \vec{g}).$  
\end{lemma}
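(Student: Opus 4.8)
The plan is to realize $(\vec{\beta},\vec{h})\lesssim(\vec{\alpha},\vec{g})$ as a chain of two of the reductions from Definition~\ref{reductions}: a linear reduction that replaces the linear part $\vec{\alpha}$ by $\vec{\beta}$, followed by a syzygy reduction that corrects the degree-$(d-1)$ part. Since each $\alpha_i$ lies in $\Sp(\beta_1,\dots,\beta_r)$, write $\alpha_i=\sum_{j=1}^r c_{i,j}\beta_j$; the linear reduction rule then produces a slice rank decomposition $(\vec{\beta},\vec{h}')$ of $f$ with $h'_j=\sum_{i=1}^r c_{i,j}g_i$, together with $(\vec{\beta},\vec{h}')\lesssim(\vec{\alpha},\vec{g})$. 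Now $(\vec{\beta},\vec{h})$ and $(\vec{\beta},\vec{h}')$ are both slice rank decompositions of $f$ with the \emph{same} linear part, so $\sum_{j=1}^r\beta_j(h_j-h'_j)=0$. By transitivity of $\lesssim$ it therefore suffices to prove $(\vec{\beta},\vec{h})\lesssim(\vec{\beta},\vec{h}')$, and for this, by the syzygy reduction rule, it is enough to exhibit an alternating matrix $(q_{j,i})$ of forms of degree $d-2$ with $h_j-h'_j=\sum_i q_{j,i}\beta_i$ for every $j$ (feeding $-(q_{j,i})$ into the rule then carries $(\vec{\beta},\vec{h}')$ to $(\vec{\beta},\vec{h})$). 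The case $h_j=h'_j$ for some $j$ is harmless: the corresponding row and column of the matrix may be taken to be zero.

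So everything reduces to the following algebraic claim: if $\beta_1,\dots,\beta_r$ are linearly independent linear forms and $\sum_{j=1}^r\beta_j k_j=0$ for forms $k_j$ of degree $d-1$, then $k_j=\sum_i q_{j,i}\beta_i$ for some alternating matrix $(q_{j,i})$ of forms of degree $d-2$; that is, the first syzygies of a regular sequence are generated by the Koszul ones. To keep the argument self-contained I would first reduce to variables: extend $\beta_1,\dots,\beta_r$ to a basis of the space of linear forms and apply the resulting invertible linear substitution, which (preserving homogeneity and the alternating-matrix form) reduces the claim to the case $\beta_j=x_j$ with $x_1,\dots,x_r$ distinct variables of the ambient polynomial ring. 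I would then induct on $r$. From $\sum_{j=1}^r x_j k_j=0$ we get $x_r k_r\in(x_1,\dots,x_{r-1})$; since the quotient of the polynomial ring by $(x_1,\dots,x_{r-1})$ is an integral domain in which $x_r\neq 0$, it follows that $k_r\in(x_1,\dots,x_{r-1})$, say $k_r=\sum_{i=1}^{r-1}b_i x_i$ with each $b_i$ homogeneous of degree $d-2$. Then $\sum_{j=1}^{r-1}x_j(k_j+x_r b_j)=0$ is a syzygy of $x_1,\dots,x_{r-1}$, to which the inductive hypothesis applies, and adding back the Koszul-type corrections coming from the $b_i$ expresses $(k_1,\dots,k_r)$ as a combination of Koszul syzygies, i.e.\ in the desired alternating-matrix form. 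Undoing the substitution returns the claim for $\beta_1,\dots,\beta_r$ with all degrees preserved.

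The only real obstacle is this syzygy claim, and within it the one substantive input is that linearly independent linear forms form a regular sequence---equivalently, after passing to coordinates, that $x_r$ is a nonzerodivisor modulo $(x_1,\dots,x_{r-1})$ in a polynomial ring over a field. The remainder is bookkeeping: tracking homogeneity so that the forms $q_{j,i}$ indeed have degree $d-2$ (which is exactly what the syzygy reduction permits), and keeping the signs straight so that the matrix fed to the syzygy reduction is the negative of the one coming from the claim. Once the claim is in hand, the two reductions chain together via transitivity to give $(\vec{\beta},\vec{h})\lesssim(\vec{\alpha},\vec{g})$, as desired.
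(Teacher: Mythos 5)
Your proof is correct and follows essentially the same path as the paper: after a linear reduction replacing $\vec{\alpha}$ by $\vec{\beta}$, everything reduces to showing that any homogeneous relation $\sum_j \beta_j k_j = 0$ among linearly independent linear forms is a combination of Koszul syzygies, which both you and the paper establish by induction on $r$. The only difference lies in the mechanics of the inductive step---you isolate $k_r$ by passing to the quotient by $(x_1,\dots,x_{r-1})$ and then apply the inductive hypothesis to the corrected forms $k_j + x_r b_j$, whereas the paper restricts the $t_j$ to the hyperplane where $\beta_r$ vanishes, applies the inductive hypothesis there, lifts, and divides by $\beta_r$---but both versions rest on the same regular-sequence fact and are equally valid.
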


\begin{proof}
    By assumption, $\alpha_i = \sum_j c_{i,j}\beta_j.$ Plugging this in yields
    \[
    0 = f-f = \sum_{j=1}^r \beta_j (h_j - \sum_{i=1}^r c_{i,j} g_i). 
    \]
    The proof is complete once we show that if $t_1,\ldots,t_r$ are forms of degree $d-1$  satisfying $0 = \sum_{j=1}^r \beta_j t_j$ then $t_j = \sum_{i\neq j} q_{i,j}\beta_i$ for forms $q_{i,j}$ of degree $d-2$ which satisfy  $q_{i,j} = -q_{j,i}$. This we prove by induction on $r,$ following an argument from~\cite{usr0192}.
    
   In the base case $r=1$ the equality $\beta_1 t_1 = 0$ implies $t_1 = 0$ as claimed. Suppose the claim holds for $r-1$ and that $0 = \sum_{j=1}^r \beta_j t_j.$ Restricting to the hyperplane $W$ where $\beta_r$ vanishes, we have $ 0 = \left( \sum_{j=1}^{r-1} \beta_j t_j \right) \restriction_W.$ The linear forms $\beta_1,\ldots,\beta_{r-1}$ are linearly independent when restricted to $W,$ so the inductive hypothesis implies  
   that for $j<r$ we have $t_j\restriction_W = \sum_{i\neq j,r} q_{i,j}\beta_i \restriction_W$, where $q_{i,j}$ are forms on $W$ of degree $d-2$ satisfying $q_{i,j} = -q_{j,i}$ for all $i<j<r$. 
   
   We lift $q_{i,j}$ to forms of degree $d-2$ on the whole space while preserving the skew-symmetry $q_{i,j} = -q_{j,i}$ for all $i<j<r.$   
   This gives us $t_j = \sum_{i\neq j,r} q_{i,j}\beta_i +s_j\beta_r $ for $j< r$, where $s_j$ are forms of degree $d-2.$ Therefore,
   \[
   0 = \sum_{j=1}^r \beta_j t_j = \sum_{j=1}^{r-1} \beta_j \left(\sum_{i\neq j,r}  q_{i,j}\beta_i + s_j\beta_r \right) + \beta_r t_r = \beta_r \left(\sum_{j=1}^{r-1} s_j\beta_j +t_r \right).
   \]
   Dividing both sides by $\beta_r$ yields  $t_r = -\sum_{i=1}^{r-1} s_i\beta_i,$ proving the inductive claim with $q_{r,i} = s_i =  -q_{i,r}$ for $i<r$.  
 
\end{proof}

\begin{corollary}\label{srk-equiv}
    If $(\vec{\alpha}, \vec{g})$ and $(\vec{\beta}, \vec{h})$ are two slice rank decompositions of $f$, and $\alpha_i$ and $\beta_i$ are bases for the same linear space, then the two slice rank decompositions are equivalent.
\end{corollary}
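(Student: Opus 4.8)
The plan is to simply invoke Lemma~\ref{lem:srk-span} twice, once in each direction. Write $V$ for the common linear span of the $\alpha_i$ and of the $\beta_i$; by hypothesis $V$ has dimension $r$ and both tuples are bases of $V$.

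First I would apply Lemma~\ref{lem:srk-span} with the roles as written: $\beta_1,\ldots,\beta_r$ are linearly independent (being a basis of $V$), and their span, namely $V$, contains $\alpha_1,\ldots,\alpha_r$. The lemma then gives $(\vec{\beta},\vec{h}) \lesssim (\vec{\alpha},\vec{g})$. Next I would apply the same lemma with the roles of the two decompositions exchanged: $\alpha_1,\ldots,\alpha_r$ are linearly independent and their span $V$ contains $\beta_1,\ldots,\beta_r$, yielding $(\vec{\alpha},\vec{g}) \lesssim (\vec{\beta},\vec{h})$.

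Having both relations, the conclusion is immediate from Definition~\ref{srk-eq}: the two slice rank decompositions are equivalent. There is essentially no obstacle here; the only thing to check is that the hypothesis ``bases for the same linear space'' supplies, for each ordering of the pair, exactly the two ingredients Lemma~\ref{lem:srk-span} requires — linear independence of the target tuple and containment of the source tuple in its span — which it does since a basis is linearly independent and spans the whole space.
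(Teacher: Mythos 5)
Your proof is correct and is exactly the intended argument: the paper leaves Corollary~\ref{srk-equiv} as an immediate consequence of Lemma~\ref{lem:srk-span}, and applying that lemma once in each direction, then invoking Definition~\ref{srk-eq}, is precisely how it follows.
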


\subsection{Slice rank decompositions of the determinant}

Our goal is to prove the following:

\begin{theorem}\label{thm:slice-unique}
    For every $n\ge 2$, the slice rank of $\det_n$ equals $n.$ Moreover, any slice rank decomposition $\det_n = \sum_{i=1}^n \alpha_i g_i$ is equivalent (in the sense of Definition~\ref{srk-eq}) to the Laplace expansion along the first row.
\end{theorem}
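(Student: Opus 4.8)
The plan is to reduce the statement to the classical geometry of linear spaces of singular matrices. The upper bound $\srk(\det_n)\le n$ is witnessed by the Laplace expansion, so what needs proof is the matching lower bound together with the rigidity claim. Given \emph{any} slice rank decomposition $\det_n=\sum_{i=1}^r\alpha_i g_i$, I would form the linear subspace $V=\bigcap_{i=1}^r\ker\alpha_i$ of the space $M_n$ of $n\times n$ matrices. Since $\det_n-\sum_i\alpha_i g_i$ vanishes \emph{identically as a polynomial}, the restriction $\det_n|_V$ is the zero polynomial, so every matrix in $V$ --- and, passing to the algebraic closure $\overline{\F}$, in $V\otimes\overline{\F}$ --- is singular. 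By the classical bound on the dimension of a linear space of singular matrices (Dieudonn\'e), $\dim V\le n^2-n$, hence $\codim V\ge n$; but $\codim V\le r$, so $r\ge n$. This proves $\srk(\det_n)=n$.

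Now take $r=n$. Then both inequalities above must be equalities: the $\alpha_i$ are linearly independent, and $V$ is a linear space of singular matrices of the maximum possible dimension $n^2-n$. For such maximal spaces there is again a classical classification (Dieudonn\'e): $V$ equals either $\{X:Xv=0\}$ for some nonzero vector $v$, or $\{X:w^\top X=0\}$ for some nonzero $w$. Identifying linear forms on $M_n$ with matrices via the trace pairing, this says that $\Sp(\alpha_1,\dots,\alpha_n)=V^\perp$ is a maximal linear space of matrices of rank at most $1$; there are two such families (all matrices sharing a fixed column space, or a fixed row space), and the span of the first-row entries $a_{1,1},\dots,a_{1,n}$ appearing in the Laplace expansion is the member of one of them corresponding to the row index $1$.

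The remaining step is to move $\Sp(\alpha_1,\dots,\alpha_n)$ onto $\Sp(a_{1,1},\dots,a_{1,n})$ by a symmetric reduction. The maps $X\mapsto PXQ$ with $\det P\cdot\det Q=1$, and matrix transposition, all preserve $\det_n$ and so are symmetric reductions in the sense of Definition~\ref{reductions}; transposition interchanges the two families above, while the maps $X\mapsto PXQ$ act transitively within each family (the normalization $\det P\cdot\det Q=1$ can always be arranged by rescaling one factor, using $n\ge 2$). Hence there is an invertible $A$ of this type with $\Sp(\alpha_1\circ A,\dots,\alpha_n\circ A)=\Sp(a_{1,1},\dots,a_{1,n})$. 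Then $(\vec{\alpha}\circ A,\vec{g}\circ A)\lesssim(\vec{\alpha},\vec{g})$, and since $A^{-1}$ is again of this type we in fact get equivalence; moreover $(\vec{\alpha}\circ A,\vec{g}\circ A)$ has the same linear span as the Laplace expansion along the first row, so Corollary~\ref{srk-equiv} shows those two are equivalent. Chaining the equivalences proves the theorem.

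The two steps that carry the real weight --- or at any rate call for a citation --- are the classical facts that a linear space of singular $n\times n$ matrices has dimension at most $n^2-n$ and that the spaces attaining this bound are exactly the two obvious families; everything else is bookkeeping with the reductions of Definition~\ref{reductions}. I would also be careful about two subtleties: that these classical facts are invoked over $\overline{\F}$, which is legitimate precisely because a slice rank decomposition is a polynomial identity rather than merely an identity of functions on $\F$-points; and that the ``change of basis'' moves really can be carried out inside the determinant-preserving group (by rescaling one factor to fix the product of determinants). An alternative, more self-contained route would be an induction on $n$ --- restricting a decomposition to a coordinate hyperplane and peeling off one term of the Laplace expansion --- but the geometric argument above seems cleaner.
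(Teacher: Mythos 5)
Your proof is correct and follows essentially the same route as the paper: intersect the kernels of the $\alpha_i$ to obtain a maximal-dimensional subspace of singular matrices, apply the Dieudonn\'e/Meshulam classification (the paper uses Meshulam's theorem~\cite{Meshulam85}, whose $s=n-1$ case is precisely the Dieudonn\'e statement you invoke) to place it in one of the two standard families, then use a determinant-preserving change of variables and Corollary~\ref{srk-equiv} to reduce to the Laplace expansion. Two small remarks: the passage to $\overline{\F}$ is unnecessary since Meshulam's result holds over any field, and the paper finds a single $A\in\mathrm{SL}_n(\F)$ rather than your two-sided $X\mapsto PXQ$ action, but both are valid symmetric reductions.
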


Let $M_n(\F)$ denote the space of $n\times n$ matrices with coefficients in $\F.$ Our main tool is the following result of Meshulam~\cite{Meshulam85}. We are interested specifically in the case $s = n-1.$

\begin{proposition}[Meshulam]\label{thm:Meshulam}
Let $\mathcal{M}\subseteq M_n(\F)$ be a subspace.
\begin{enumerate}
    \item If $\dim(\mathcal{M}) > sn$ then it contains a matrix of rank greater than $s.$ 
    \item If $\dim(\mathcal{M}) = sn$ and $\rk(x) \le s$ for all $x\in \mathcal{M},$ then there exists a subspace $E\subset \F^n$ of dimension $s$ such that $\mathcal{M} = E\otimes \F^n$ or $\mathcal{M}^t = E\otimes \F^n,$ where 
    \[
     E\otimes \F^n:= \Sp\{uv^t: u\in E,v\in \F^n\}.
    \]
\end{enumerate} 
\end{proposition}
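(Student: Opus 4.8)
The plan is to prove both parts by the \emph{maximal-rank element} method, reducing part~1 to a fibered dimension count and part~2 to an analysis of when that count is tight. Throughout we may pass to the algebraic closure $\bar\F$ first: the locus of matrices of rank at most $s$ is Zariski closed and contains all $\F$-points, so the hypothesis survives, and the structural conclusion (a subspace defined over $\F$) descends. We may also replace $\mathcal M$ by $U\mathcal M V$ for invertible $U,V$, which changes neither $\dim\mathcal M$ nor the set of ranks occurring in $\mathcal M$. First I would pick $A\in\mathcal M$ of maximal rank $r\le s$ (the bound will emerge as $\dim\mathcal M\le rn\le sn$, and part~2 will force $r=s$) and, after a change of basis, take $A=\left(\begin{smallmatrix}I_r&0\\0&0\end{smallmatrix}\right)$ with blocks of sizes $r,n-r$. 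For every $B\in\mathcal M$ and scalar $\lambda$ the matrix $\lambda A+B\in\mathcal M$ has rank $\le r$; taking the Schur complement of the generically invertible top-left block $\lambda I+B_{11}$ shows $B_{22}=0$ and, in fact, $B_{21}(\lambda I+B_{11})^{-1}B_{12}=0$ identically in $\lambda$, whence $B_{21}B_{11}^{k}B_{12}=0$ for all $k\ge 0$; in particular $B_{21}B_{12}=0$.

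For part~1, consider $\rho\colon\mathcal M\to M_r(\F)$, $B\mapsto B_{11}$. Its image has dimension at most $r^2$, and its kernel is the space of matrices $\left(\begin{smallmatrix}0&B_{12}\\B_{21}&0\end{smallmatrix}\right)\in\mathcal M$; since the two nonzero blocks sit in disjoint rows and columns, such a matrix has rank $\rk B_{12}+\rk B_{21}\le r$ and, by the above, satisfies $B_{21}B_{12}=0$. Thus $\ker\rho$ embeds as a linear subspace of $\{(X,Y)\in M_{r\times(n-r)}\oplus M_{(n-r)\times r}:YX=0,\ \rk X+\rk Y\le r\}$, and the key lemma is that any such subspace has dimension at most $r(n-r)$. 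In the case $r=n-1$ that the paper cares about, $X$ is a column and $Y$ a row, $\rk X+\rk Y\le n-1$ is vacuous, and $YX=0$ is the single equation $\langle Y^{t},X\rangle=0$, so the lemma reduces to the classical statement that a totally isotropic subspace of the split quadratic form $\langle u,v\rangle$ on $\F^{n-1}\oplus\F^{n-1}$ has dimension at most $n-1$. Altogether $\dim\mathcal M\le r^2+r(n-r)=rn\le sn$, proving part~1.

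For part~2, equality $\dim\mathcal M=sn$ forces $r=s$, forces $\rho$ to be surjective, and forces $\ker\rho$ to attain the bound above. Specializing to $s=n-1$: surjectivity of $\rho$ gives a linear section $\sigma(P)=\left(\begin{smallmatrix}P&\phi(P)\\\psi(P)^{t}&0\end{smallmatrix}\right)$, so every element of $\mathcal M$ has the form $\left(\begin{smallmatrix}P&\phi(P)+q\\(\psi(P)+w)^{t}&0\end{smallmatrix}\right)$ with $(q,w)$ ranging over the maximal isotropic $L=\ker\rho$. Crucially, since $\sigma$ is defined at every $P$---in particular at invertible $P$---I would use the rank constraint there: $\det\left(\begin{smallmatrix}P&\phi(P)+q\\(\psi(P)+w)^{t}&0\end{smallmatrix}\right)=-\det(P)\,\langle\psi(P)+w,\,P^{-1}(\phi(P)+q)\rangle=0$. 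Extracting the part of this identity bilinear in $(q,w)$ yields $\langle w,P^{-1}q\rangle=0$ for all invertible $P$ and all $(q,w)\in L$; since $\{P^{-1}q:P\in \mathrm{GL}_{n-1}\}$ is all nonzero vectors once $q\ne 0$, each $(q,w)\in L$ has $q=0$ or $w=0$, and as $L$ is an $(n-1)$-dimensional subspace this forces $L=\F^{n-1}\oplus 0$ or $L=0\oplus\F^{n-1}$. In the second case the part linear in $(q,w)$ of the same identity reads $\langle w,P^{-1}\phi(P)\rangle=0$ for all $w$ and all invertible $P$, so $\phi\equiv 0$, and then $\mathcal M=\{B\in M_n:\text{last column of }B\text{ is }0\}$, i.e. $\mathcal M^{t}=E\otimes\F^n$ with $E=\{x_n=0\}$; the first case symmetrically gives $\mathcal M=E\otimes\F^n$.

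The main obstacle is the equality analysis of part~2---ruling out the ``mixed'' maximal isotropic subspaces and pinning $L$ to a coordinate subspace. This is precisely where surjectivity of $\rho$ (itself part of the equality hypothesis) is indispensable, since it is what licenses the rank constraint at invertible $P$. Two secondary points need care but are routine: the key lemma for general $s$---that linear subspaces of $\{YX=0,\ \rk X+\rk Y\le s\}$ have dimension at most $s(n-s)$, a bilinear isotropy statement generalizing the hyperbolic-quadric bound, together with the corresponding general-$s$ version of the last step---and the descent over finite fields and in characteristic $2$, handled by the reduction to $\bar\F$ noted at the outset. For the application in this paper only $s=n-1$ is used, where the key lemma is the classical quadric bound and the argument above is self-contained.
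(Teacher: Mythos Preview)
The paper does not give its own proof of this proposition; it is quoted from Meshulam's 1985 paper and used as a black box. So there is nothing in the paper to compare your argument against.

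That said, your argument for the case $s=n-1$ (the only case the paper uses) is correct and self-contained: the maximal-rank normalization, the Schur-complement identity forcing $B_{22}=0$ and $B_{21}B_{12}=0$, the isotropy bound $\dim\ker\rho\le n-1$ for the hyperbolic form, and the equality analysis pinning $L$ to a coordinate factor all go through as you describe. The descent from $\bar\F$ to $\F$ is also fine, though it deserves one more sentence: once $\mathcal M\otimes\bar\F=E_{\bar\F}\otimes\bar\F^n$, let $E$ be the $\F$-span of the column spaces of all matrices in $\mathcal M$; then $E\otimes\bar\F\subseteq E_{\bar\F}$ gives $\dim_\F E\le s$, while $\mathcal M\subseteq E\otimes\F^n$ and $\dim\mathcal M=sn$ force equality.

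For general $s$, however, your ``key lemma'' is not routine. The assertion that every linear subspace of $\{(X,Y):YX=0,\ \rk X+\rk Y\le s\}$ has dimension at most $s(n-s)$ is, up to the block decomposition, a statement of the same type and strength as Part~1 itself: $\begin{psmallmatrix}0&X\\Y&0\end{psmallmatrix}$ is an $n\times n$ matrix of rank $\rk X+\rk Y\le s$, so you are bounding the dimension of a (structured) space of rank-$\le s$ matrices by $s(n-s)$, which is not obviously easier than bounding $\dim\mathcal M$ by $sn$. The reduction has traded one bounded-rank subspace problem for another rather than for a genuinely simpler bilinear-isotropy fact; the hyperbolic-quadric shortcut is special to $s=n-1$. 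Meshulam's original proof handles general $s$ by a different induction, so if you want the full statement you should either supply an independent proof of your key lemma or follow his argument. For the purposes of this paper, your $s=n-1$ proof suffices.
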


We introduce the notation $R_1 = \Sp(x_{1,1},\ldots,x_{1,n})$ for the linear space spanned by the coordinates of the first row.

\begin{corollary}\label{coro:Meshulam}
    Suppose $\det_n = \sum_{i=1}^r \alpha_i g_i$ is a slice rank decomposition. Then:
    \begin{enumerate}
        \item The inequality $r\ge n$ holds and 
        \item If $r =n$ there exists $A\in \textnormal{SL}_n(\F)$ such that either $(\alpha_i(Ax))_{i\in [n]}$ or $(\alpha_i(Ax^t))_{i\in [n]}$ is a basis for $R_1.$
    \end{enumerate} 
\end{corollary}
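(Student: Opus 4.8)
The plan is to apply Meshulam's result (Proposition~\ref{thm:Meshulam}) with $s = n-1$ to the linear space $\mathcal{M} = \Sp(\alpha_1,\ldots,\alpha_r)$, viewed as a subspace of $M_n(\F) \cong (\F^n)^*\otimes(\F^n)^*$ after identifying linear forms on matrices with matrices themselves. The key observation that makes Meshulam applicable is a rank bound on the $\alpha_i$: I claim that every linear form $\alpha$ in the span of $\alpha_1,\ldots,\alpha_r$, when written as a matrix, has rank at most $n-1$. Indeed, if some linear combination $\alpha = \sum_i c_i \alpha_i$ had rank $n$ as a matrix, then after a change of variables $A \in \mathrm{GL}_n(\F)$ we could assume $\alpha(x) = x_{1,1}$, say (or more precisely, $\alpha$ becomes a nondegenerate bilinear form, which we can diagonalize to a "coordinate" form); but then $\det_n = \sum_i \alpha_i g_i$ would exhibit $x_{1,1}$ (after the substitution) as dividing $\det_n$ modulo the ideal generated by the other $\alpha_i$'s — and one checks this is impossible because the determinant is irreducible and does not lie in any ideal generated by $n-1$ linear forms of rank $< n$ together with one rank-$n$ linear form in a way that forces such divisibility. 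The cleaner route: observe that $\partial \det_n / \partial(\text{entries})$ spans a space related to cofactors, and a rank-$n$ linear factor would force $\det_n$ to vanish on a codimension-$1$ piece it does not vanish on. This irreducibility-type argument is the first thing I would nail down carefully.

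Granting the rank bound, Proposition~\ref{thm:Meshulam}(1) immediately gives $\dim \mathcal{M} \le (n-1)n$, but I actually want the sharper conclusion $r \ge n$. For this I would argue that the $\alpha_i$ must in fact be linearly independent in any slice rank decomposition of minimal-ish length, or rather directly: if $r < n$, then $\det_n$ lies in the ideal generated by fewer than $n$ linear forms, each of rank $\le n-1$. But $\det_n$ cut out by the vanishing of $r < n$ such forms — restricting to the common zero locus, which has codimension at most $r \le n-1$ — would have to vanish identically there, contradicting that the determinant of a generic matrix with $r$ linear constraints among its entries is still not identically zero (one can exhibit an explicit matrix). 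Alternatively, and perhaps more robustly, I would restrict the decomposition to a generic $(n - r)$-dimensional affine subspace and induct, or simply invoke that $\srk(\det_n) \ge n$ via a dimension count on the partial derivatives $\partial \det_n/\partial x_{1,j} = \pm \det(A_{1,j})$, which are $n$ linearly independent forms of degree $n-1$ that must all be expressible using the $g_i$'s. This last phrasing — differentiating the identity $\det_n = \sum \alpha_i g_i$ with respect to the first row and noting the cofactors $\det(A_{1,j})$ span an $n$-dimensional space — is likely the slickest proof of part (1).

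For part (2), assume $r = n$. Then $\dim \mathcal{M} \le n$ with every element of rank $\le n-1$; combined with the lower bound reasoning (the cofactors along the first row span an $n$-dimensional space that is "controlled" by $\mathcal{M}$ in the sense that will come out of the derivative identity), I would show $\dim \mathcal{M} = n$ exactly, so that $\alpha_1,\ldots,\alpha_n$ are linearly independent. Now Proposition~\ref{thm:Meshulam}(2) with $s = n-1$ applies: there is an $(n-1)$-dimensional $E \subset \F^n$ with $\mathcal{M} = E \otimes \F^n$ or $\mathcal{M}^t = E \otimes \F^n$. In the first case, $\mathcal{M}$ consists of matrices whose rows all lie in $E$; equivalently, every $\alpha_i$, as a bilinear form $\alpha_i(x) = \langle$ row vectors $\rangle$, only involves the directions in $E \subset \F^n$ acting on the column index — meaning, after choosing a linear map $A \in \mathrm{SL}_n(\F)$ realizing a coordinate change on $\F^n$ sending $E$ to the span of the last $n-1$ standard basis vectors (and rescaling to land in $\mathrm{SL}_n$), the forms $\alpha_i(Ax)$ all lie in... here I need to match up with $R_1 = \Sp(x_{1,1},\ldots,x_{1,n})$, so the correct bookkeeping is that $E \otimes \F^n$ after the change of variable becomes precisely the span of the first-row coordinates. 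The second case ($\mathcal{M}^t = E\otimes \F^n$) is handled identically after transposing, which is legitimate because $\det_n$ is transposition-invariant, and produces the $(\alpha_i(Ax^t))$ alternative. The main obstacle I anticipate is the very first step: ruling out a rank-$n$ linear form in the span of the $\alpha_i$, i.e., showing $\det_n$ genuinely requires all its slicing linear forms to be degenerate — this needs either the irreducibility of $\det_n$ or an explicit transversality argument, and getting the quantifiers right (it is every nonzero element of the span, not just the $\alpha_i$ themselves) is where care is needed.
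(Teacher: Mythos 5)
Your proposal goes wrong at the first step: you apply Meshulam's theorem to the linear span $\Sp(\alpha_1,\ldots,\alpha_r)$, but the object the paper uses (and the one that actually works) is the \emph{common zero locus} $\mathcal{M}=\{x\in M_n(\F)\colon \alpha_1(x)=\cdots=\alpha_r(x)=0\}$. The key observation is elementary: since each term $\alpha_i g_i$ vanishes identically on $\mathcal{M}$, so does $\det_n$, and hence every matrix in $\mathcal{M}$ is singular. Meshulam(1) then bounds $\dim\mathcal{M}\le (n-1)n$, and since $\dim\mathcal{M}\ge n^2-r$ this gives $r\ge n$ immediately. No claim about the ranks of the $\alpha_i$'s themselves is needed, and the rank bound you try to establish (``every nonzero element of $\Sp(\alpha_i)$ has rank $\le n-1$'') is both unproven in your sketch and a red herring: even if true, Meshulam(1) applied to a space of dimension $\le n$ yields nothing.

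Your alternative for part (1) via partial derivatives is also incomplete: differentiating $\det_n=\sum_i \alpha_i g_i$ with respect to $x_{1,j}$ produces $\sum_i(\partial_{1,j}\alpha_i)g_i + \sum_i\alpha_i\,\partial_{1,j}g_i$, and the second sum is not controlled by the $g_i$'s alone, so the linear independence of the cofactors does not transfer to a lower bound on $r$ in the way you assert. The sketch closest to correct is your ``restricting to the common zero locus'' aside — but there you say ``one can exhibit an explicit matrix,'' which is precisely the nontrivial content of Meshulam(1) over an arbitrary (possibly finite) field; you cannot wave it away.

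For part (2) the gap is fatal: Meshulam(2) with $s=n-1$ requires a subspace of dimension exactly $sn=(n-1)n$ consisting of matrices of rank $\le n-1$. The span $\Sp(\alpha_1,\ldots,\alpha_n)$ has dimension at most $n$, which is nowhere near $(n-1)n$ for $n\ge 2$, so the theorem says nothing about it. It is the zero locus $\mathcal{M}$ — which has dimension exactly $(n-1)n$ when $r=n$ — to which Meshulam(2) applies, yielding $\mathcal{M}=E\otimes\F^n$ or $\mathcal{M}^t=E\otimes\F^n$. One then chooses $A\in\mathrm{SL}_n(\F)$ carrying $E$ to $\Sp(e_2,\ldots,e_n)$, so that $A\mathcal{M}$ (resp.\ $A\mathcal{M}^t$) is the space of matrices with zero first row; unwinding what this means for the forms vanishing on $\mathcal{M}$ gives that $(\alpha_i(A^{-1}x))_i$ (resp.\ $(\alpha_i(A^{-1}x^t))_i$) is a basis for $R_1$.
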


\begin{proof}
    Let
    \[
    \mathcal{M} = \{x\in M_n(\F) : \alpha_1(x) = \ldots \alpha_r (x) = 0\}.
    \]
    By assumption, $\det_n$ vanishes identically on $\mathcal{M},$ so all the matrices in $\mathcal{M}$ have rank at most $n-1.$ By Proposition~\ref{thm:Meshulam},
    \[
    (n-1)n \ge \dim \mathcal{M} \ge n^2-r \implies r\ge n.
    \] 
    This proves the first assertion. 
    Moreover, if $r = n$ then we are in the second case of Proposition~\ref{thm:Meshulam} (with $s=n-1$), so $\mathcal{M} = \F^n\otimes E$ or $\mathcal{M} = E\otimes \F^n$ for some $(n-1)$-dimensional subspace $E \subset \F^n$.
    (Equivalently, either all matrices in $\mathcal{M}$ have row space $E$, or they all have column space $E$.)
    We may then choose $A\in \textnormal{SL}_d(\F)$ 
    such that either $A\mathcal{M}$ or $A\mathcal{M}^t$ is the space of matrices with zeros in the first row, which means that $(\alpha_i(A^{-1}x))_{i\in [n]}$ or $(\alpha_i(A^{-1}x^t))_{i\in [n]}$ is a basis for $R_1,$ respectively. Indeed, let $A$ map $E$ to the subspace $E' \sub \F^n$ spanned by the standard basis vectors $e_2,\ldots,e_n$, 
    and note that $A\mathcal{M} = A(E\otimes \F^n) = \Sp\{Au v^t: u\in E,v\in \F^n\} = E' \otimes \F^n$, as desired.  
\end{proof}

\begin{proof}[Proof of Theorem~\ref{thm:slice-unique}]

We have already proved that the slice rank of $\det_n$ equals $n.$ 
Let $\det_n = \sum_{i=1}^n \alpha_i g_i$ is a slice rank decomposition with $n$ summands. 
By Corollary~\ref{coro:Meshulam}, there exists $A\in \textnormal{SL}_n(\F)$ such that either $\alpha_i(Ax)$ or $\alpha_i(Ax^t)$ is a basis for $R_1.$ Note that the determinant is invariant under the invertible linear maps $x\mapsto x^t$ and $x\mapsto Ax$ (for any $A \in \textnormal{SL}_n(\F)$), so this constitutes a symmetric reduction. 
Let $(\vec{\alpha'},\vec{g})$, with $\alpha'_i = \alpha_i \circ A$ for every $i$, be the corresponding decomposition of $\det_n$, which is thus equivalent to our initial decomposition.
Now, if $(\vec{\beta}, \vec{h})$ denotes the Laplace expansion along the first row, then
we have that both $\vec{\alpha'}$ and $\vec{\beta}$ are bases for $R_1$.
Applying Corollary~\ref{srk-equiv}, we deduce that they are equivalent, which, by transitivity, completes the proof.
\end{proof}

\section{Partition rank of the determinant}

In this section we prove nontrivial lower and upper bounds on $\prk(\det_n)$, the partition rank of the determinant, when viewed as a multilinear function in the rows of a matrix. We explain why current techniques fail to obtain any nontrivial lower bound and also why our lower bound cannot be improved even with our new argument.


\subsection{Partition rank lower bound}

Our proof of the lower bound will proceed by repeatedly finding an assignment that zeros out at least one term in the expansion. 
It crucially uses the highly symmetric nature of the determinant. We will need the following simple lemma.

\begin{lemma}\label{lemma:ind-zero}
    Let $\vec{Q}\colon(\F^n)^k \to \F^\ell$ be a $k$-linear map with $k+\ell \le n$.
    Then there exist linearly independent $v_1,\ldots,v_k \in \F^n$ such that $\vec{Q}(v_1,\ldots,v_k)=0$.
\end{lemma}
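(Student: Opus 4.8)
The plan is to induct on $k$. For the base case $k=1$, the map $\vec{Q}\colon\F^n\to\F^\ell$ is linear, so its kernel has dimension at least $n-\ell\ge 1$ (here we use $k+\ell\le n$, i.e.\ $1+\ell\le n$); any nonzero vector in this kernel serves as $v_1$, and a single nonzero vector is by convention linearly independent.

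For the inductive step, assume $k\ge 2$ and that the statement holds for $(k-1)$-linear maps on any finite-dimensional space. The idea is to reserve the last slot of $\vec{Q}$ for a fixed vector chosen outside a hyperplane, which will make linear independence automatic. Concretely, fix any nonzero $e\in\F^n$ and a hyperplane $W\subset\F^n$ with $e\notin W$, so that $\dim W=n-1$ and $\F^n=W\oplus\F e$. Define the $(k-1)$-linear map $\vec{Q}_e\colon W^{k-1}\to\F^\ell$ by $\vec{Q}_e(v_1,\dots,v_{k-1})=\vec{Q}(v_1,\dots,v_{k-1},e)$; this is genuinely $(k-1)$-linear since $\vec{Q}$ is $k$-linear. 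Because $k+\ell\le n$ we have $(k-1)+\ell\le n-1=\dim W$, so applying the induction hypothesis to $\vec{Q}_e$ (after identifying $W$ with $\F^{n-1}$ via a linear isomorphism, under which linear independence and multilinearity are preserved) yields linearly independent $v_1,\dots,v_{k-1}\in W$ with $\vec{Q}(v_1,\dots,v_{k-1},e)=0$. Setting $v_k=e$, the vectors $v_1,\dots,v_k$ are linearly independent — since $v_1,\dots,v_{k-1}\in W$ while $v_k\notin W$, no nontrivial relation is possible — and $\vec{Q}(v_1,\dots,v_k)=\vec{Q}(v_1,\dots,v_{k-1},e)=0$ by construction, completing the induction.

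I do not expect a genuine obstacle here: the only subtlety is ensuring that all $k$ chosen vectors are linearly independent, rather than just the first $k-1$ produced by the inductive call, and this is exactly what the hyperplane trick resolves, since a vector lying outside $W$ cannot be in the span of vectors lying inside $W$. The rest is a routine dimension count ($k+\ell\le n \Rightarrow (k-1)+\ell\le n-1$) together with the observation that fixing one argument of a $k$-linear map gives a $(k-1)$-linear map.
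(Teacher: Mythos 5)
Your proof is correct, but the route you take is genuinely different from the paper's. The paper gives a direct, non-inductive argument: it simply picks any $k-1$ linearly independent vectors $v_1,\ldots,v_{k-1}\in\F^n$ up front, observes that $L(x)=\vec{Q}(v_1,\ldots,v_{k-1},x)$ is a linear map $\F^n\to\F^\ell$ whose kernel has dimension $\ge n-\ell\ge k$, and picks $v_k\in\ker L\setminus\Sp\{v_1,\ldots,v_{k-1}\}$ (which exists because $\dim\ker L\ge k>k-1$). The key realization there is that only the last vector needs to be chosen with care; the first $k-1$ are completely unconstrained. Your induction instead peels one slot off at a time, constraining the inner vectors to a hyperplane $W$ and placing the outer vector $e$ outside $W$ to force independence. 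Both arguments reduce to the same dimension count $k+\ell\le n$, and both resolve the linear-independence subtlety, just with complementary mechanisms (kernel-minus-span vs.\ hyperplane complement). The paper's version is a bit leaner --- one step rather than a recursion with identifications $W\cong\F^{n-1}$ --- but yours is equally valid, and the hyperplane trick you use is a clean way to make independence automatic without having to mention spans at the end.
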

\begin{proof}
    Let $v_1,\ldots,v_{k-1} \in \F^n$ be any $k-1$ linearly independent vectors.
    Consider the restriction $L(x)=\vec{Q}(v_1,\ldots,v_{k-1},x)$, which is a linear map
    $L \colon \F^n \to \F^\ell$.
    We have $\dim\ker(L) \ge n-\ell \ge k$, by assumption.
    Thus, there exists $v_k \in \ker(L) \setminus \Sp\{v_1,\ldots,v_{k-1}\}$.
    The vectors $v_1,\ldots,v_k$ are linearly independent and satisfy $\vec{Q}(v_1,\ldots,v_k)=L(v_k)=0$, as needed.
\end{proof}

We introduce one more piece of useful notation. If $T \colon (\F^n)^d\to \F$ is a multilinear form, $k<d$, and $v_1,\ldots,v_k\in \F^n$ are fixed, then $T[v_1,\ldots,v_k]:(\F^n)^{d-k}\to \F$ is the multilinear form obtained by fixing the first $k$ vectors in $T.$ We can now proceed with our proof of the lower bound.
In everything that follows, $\lg(x)$ stands for $\log_2(x)$.

\begin{theorem}\label{thm:prk-LB}
    $\prk(\det_n) \ge \lg(n)+1$.
\end{theorem}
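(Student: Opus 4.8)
The plan is to show that any partition rank expansion $\det_n = \sum_{i=1}^r P_i Q_i$ with few summands can be "killed" by substituting linearly independent rows, contradicting that $\det_n$ is nonzero on tuples of linearly independent vectors. Each term $P_i Q_i$ is a product of two multilinear forms: $P_i$ depends on some subset $S_i \subseteq [n]$ of the row-variables and $Q_i$ on the complement $[n]\setminus S_i$ (after absorbing, we may assume these are complementary since $\det_n$ is $n$-linear and each $P_i$, $Q_i$ is multilinear in disjoint sets of rows). The key point is that for each term we have some freedom: we can choose to zero out either the factor $P_i$ or the factor $Q_i$, whichever involves fewer row-variables --- and one of the two involves at most $\lfloor n/2 \rfloor$ rows. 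So the strategy is a halving argument: at each stage we have a term that "survives", look at its two factors $P$ and $Q$ partitioning the remaining rows, WLOG $P$ uses at most half the remaining rows, and use Lemma~\ref{lemma:ind-zero} to find linearly independent vectors for those rows making $P=0$; this kills that term. Crucially, by the symmetry of $\det_n$ we may re-index rows so that the surviving term always gets its "small side" among the not-yet-assigned rows, and the budget of free (unassigned) rows halves --- roughly --- with each killed term.

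More precisely, I would set up an induction. Suppose $\det_n = \sum_{i=1}^r P_i Q_i$. We process terms one at a time, maintaining a set $U$ of rows that are still "free" (unassigned) and, for the surviving terms, their factor that lies entirely within $U$. Initially $U = [n]$. At each step, pick any surviving term $P_i Q_i$; its variables partition $[n]$, and the part of it lying in $U$ splits as $U = A \sqcup B$ with $P_i$ depending on $A$-rows (among $U$), $Q_i$ on $B$-rows (among $U$); one of $|A|, |B|$ is at most $\lceil |U|/2\rceil$ --- say $|A| \le \lceil|U|/2\rceil$. We want to assign the rows in $A$ so that $P_i$ vanishes. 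But $P_i$ also depends on rows outside $U$ that have already been assigned; that's fine, those are now constants, so $P_i$ restricted to the $A$-rows is a $|A|$-linear form, and to apply Lemma~\ref{lemma:ind-zero} we need $|A| + 1 \le n$ (taking $\ell = 1$), which holds. Actually we need more care: the vectors we pick for $A$ must be linearly independent \emph{from the vectors already chosen for rows outside $U$}, so that the full final tuple is linearly independent. To handle this, I'd strengthen Lemma~\ref{lemma:ind-zero} slightly (or apply it inside a complementary subspace): if we work in a subspace of $\F^n$ of dimension $n - (\text{number of already-assigned rows})$ that is complementary to the span of the assigned vectors, then we need $|A| + 1 \le n - (\text{assigned rows})$. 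Since at the step where $|U| = m$ we have $n - m$ assigned rows and $|A| \le \lceil m/2 \rceil$, the requirement becomes $\lceil m/2\rceil + 1 \le m$, i.e. $m \ge 2$, which is fine as long as we stop when $|U| = 1$. After killing the term, remove the $A$-rows from $U$; the new free set has size $|U| - |A| \ge \lfloor |U|/2 \rfloor$.

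Now count: starting from $|U| = n$, after killing $t$ terms the free set still has size at least $n / 2^t$ (rounding carefully: $|U|$ goes from $m$ to at least $\lceil m/2 \rceil$, so after $t$ steps $|U| \ge \lceil n/2^t \rceil \ge 1$ as long as $2^t \le n$... I should track $\lceil \cdot \rceil$ to get the clean bound $\lg(n)+1$). So as long as $r \le \lg(n)$, we can kill all $r$ terms while keeping $|U| \ge 1$; we then arbitrarily complete the assignment of the remaining row(s) to any vectors keeping the whole tuple linearly independent. The result is a linearly independent $n$-tuple of rows on which every term vanishes, so $\det_n = 0$ there --- contradiction, since the determinant of an invertible matrix is nonzero. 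Hence $r \ge \lg(n) + 1$.

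The main obstacle I anticipate is the linear-independence bookkeeping: ensuring that the vectors chosen at each stage are independent not just among themselves but jointly with all previously chosen vectors, while simultaneously keeping enough room in the ambient space to invoke the zeroing lemma (which needs $k + \ell \le$ (dimension available)). The halving inequality $\lceil m/2 \rceil + 1 \le m$ is exactly tight-ish for small $m$, so the base case / stopping condition ($|U| = 1$, or perhaps $|U| = 2$) must be handled with the right rounding to land on $\lg(n)+1$ rather than $\lg(n)$. A secondary subtlety is justifying the normal form "each term's two factors depend on complementary sets of rows": since $\det_n$ is multilinear and each $P_i Q_i$ is a product of two \emph{multilinear} forms, $P_i$ and $Q_i$ must depend on disjoint row sets whose union is all of $[n]$ (any row appearing in neither could be specialized to make the term vanish trivially, or one argues by multilinearity in that row) --- I'd state this cleanly at the outset, possibly invoking that $\prk$ terms can be assumed to be products over a partition of the variables.
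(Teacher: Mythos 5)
Your overall strategy (a halving argument using Lemma~\ref{lemma:ind-zero} to zero out the ``small side'' of a term) is in the spirit of the paper's proof, but there is a genuine gap in the one-term-at-a-time scheme that the paper's argument is specifically designed to avoid.

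The gap: after you assign the rows in $A$ (the $A$-rows of the term you just killed), a \emph{different} surviving term $P_jQ_j$ may end up with $I_j\cap U=\emptyset$, i.e.\ all the rows its small factor $P_j$ depends on have already been assigned. Then $P_j$ has become a fixed scalar, possibly nonzero, and your only hope of killing term $j$ is to zero out $Q_j$; but $Q_j$ depends on \emph{all} of $U$, so you would have to spend all of $U$ on a single term, destroying the halving invariant. A concrete instance: if two terms share the same small side $I_1=I_2\subseteq[n]$, then killing term~1 by assigning the rows of $I_1$ may leave $P_2$ nonzero, and term~2 is now in exactly this bad position. ``Pick any surviving term'' and ``$|A|\le\lceil |U|/2\rceil$'' quietly permit $A=\emptyset$, which makes the step impossible.

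The paper closes this gap with two linked ideas that your proposal is missing. First, it takes $I$ to be a \emph{minimal-under-inclusion} set among the $I_i$ (with a tiebreak convention forcing $|I_i|\le n/2$), and then applies Lemma~\ref{lemma:ind-zero} to the whole vector $(Q_1,\ldots,Q_\ell)$ of factors whose support equals $I$ — killing them \emph{all at once} with a single assignment of the $k=|I|$ rows; this is precisely why the lemma is stated for a tuple $\vec Q\colon(\F^n)^k\to\F^\ell$ rather than a single form. Second, minimality of $I$ guarantees that for every remaining $i>\ell$ both $I_i\nsubseteq[k]$ and $[n]\setminus I_i\nsubseteq[k]$, so after substituting $e_1,\ldots,e_k$ (a change of variables replaces the chosen $v_i$ by $e_i$, so there is no complementary-subspace bookkeeping at all) the remaining terms still have two nonconstant factors and form a genuine partition rank decomposition of $\det_{n-k}$. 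The induction is then on $n$ via $\prk(\det_n)>\prk(\det_{n-k})$, not via directly constructing an invertible matrix with zero determinant. A further small point the paper handles and your proposal omits: if $k+r>n$ the lemma cannot be applied, but then the trivial bound $\prk(\det_{n-k})\le n-k<r$ finishes the inductive step directly. Your proposal would need the minimal-set/batch-kill idea, the ``both $I_i$ and $I_i^c$ escape $[k]$'' observation, and this boundary case to become a correct proof.
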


\begin{proof}
    This follows by induction on $n$ from the statement that there exists some $n/2 \le m <n$ with $\prk(\det_n) > \prk(\det_m).$ Indeed,
    $$\prk(\det_n) > \prk(\det_m) \ge \lceil\lg(m)\rceil+1\ge \lceil\lg(n/2)\rceil+1 \ge \lceil\lg(n)\rceil.$$

    To prove this statement, consider a partition rank decomposition of $\det_n$ of minimal length $\det_n= \sum_{i=1}^{r} Q_iR_i,$
    where for each $i$, $Q_i$ and $R_i$ are multilinear forms in disjoint sets of variable vectors.
    Let $I_i \subset [n]$ correspond to the subset of vector variables that $Q_i \in (\F^n)^{|I_i|} \to \F$ depends on (so $|I_i|=\deg(Q_i)$, and $R_i$ depends on $I_i^c := [n]\sm I_i$).
    We choose our notation so that 
    $|I_i| \le n/2$ for all $i\in [r]$, and $1\in I_i$ if $|I_i| = n/2$ (as a tiebreaker).
    
    Let $I$ be a minimal set in $\{I_i\}_{i\in [r]}$, 
    and denote $k=|I| \le n/2.$ Assume without loss of generality that $I = [k],$ applying a permutation to the rows if necessary. Suppose that $I_1 = \cdots = I_\ell = [k],$ and $I_i\neq [k]$ for all $i>\ell.$ We may assume that $k+r \le n$ since otherwise we are done by
    \[
    \prk(\det_n) = r > n-k \ge \prk(\det_{n-k}).
    \]

    Applying lemma \ref{lemma:ind-zero} to the multilinear map $(Q_1,\ldots,Q_\ell) \colon (\F^n)^k \to \F^\ell,$
     we deduce that there exist linearly independent $v_1,\ldots,v_k$ with $Q_i(v_1,\ldots,v_k) = 0$ for all $i\in [\ell].$ Let $A\in \textnormal{SL}_n(\F)$ be a matrix with $Ae_i = v_i$ for $i=1,\ldots,k,$ where $e_i$ are the standard basis vectors. Note that
    \[
    \det_n (x_1,\ldots,x_n) = \det_n(Ax_1,\ldots,Ax_n) = \sum_{i=1}^r (Q_i\circ A) \cdot (R_i\circ A)
    \]
    so that we may assume without loss of generality that $v_i = e_i$ for $i=1,\ldots,k.$ 
     
    By minimality of $I$ and our convention for choosing the $I_i,$ we get that for any $i>\ell,$ both $I_i \nsubseteq [k]$ and also $[n]\setminus I_i \nsubseteq [k].$ 
    Therefore, 
    \[\det_{n-k}=\det_n[e_1,\ldots,e_k] = \sum_{i=\ell+1}^r (Q_iR_i)[e_1,\ldots,e_k]\] 
    is a partition rank decomposition of length $r-\ell.$ 
    Thus,
    \[
    \prk(\det_{n-k}) \le r-\ell < r = \prk(\det_n).
    \]
    This completes the proof of the theorem, recalling that $n-k \ge n/2.$ 
    
\end{proof}

\begin{remark}\label{remark:strategy}
    Existing techniques for lower-bounding the  partition rank or the slice rank of tensors~\cite{Tao16,Naslund20} proceed by showing that $\prk(T)\ge \prk(T[u])$ for a single well-chosen $u\in \F^n.$ For the determinant this inductively gives the trivial inequality $\prk(\det_n) \ge \prk(\det_2) = 2.$ Another known approach is via the inequality $\prk(T) \ge \grk(T)$~\cite{KoppartyMoZu20}. In our case $\grk(\det_n) = 2$ for all $n\ge 2$, so this again yields the trivial bound $ \prk(\det_n)\ge 2.$
    
     On the other hand, our strategy involves (carefully) fixing many---potentially $n/2$---of the $n$ vector variables at each step. We note that this novel proof strategy can never yield a super-logarithmic lower bound. This is because such an assignment might zero out only a single summand in the expansion, at the the expense of reducing $\det_n$ to $\det_{n/2}.$ Thus, repeating such an argument will always result in a partition rank lower bound of roughly $\lg(n)$. 
\end{remark}

\begin{remark}\label{remark:small-prk}
    For small values of $n$, the bound $\prk(\det_n) \ge \lceil\lg(n)\rceil+1$ in Theorem~\ref{thm:prk-LB} implies $\prk(\det_2) = 2$, $\prk(\det_3) = 3$, and $\prk(\det_4) \ge 3$.
\end{remark}

Next, we show that our lower bound is tight for $\det_4$.

\subsection{Partition rank upper bound}

Here we demonstrate that, unlike for slice rank, the partition rank of the determinant is not always full. We do so by presenting a (perhaps surprising) ``quadratic'' expansion of the determinant of $4 \times 4$ matrices, quite unlike the Laplace expansion.


For an $n \times n$ matrix $A$ and subsets $I,J \sub [n]$, 
we denote by $A_{I,J}$ the $|I| \times |J|$ submatrix of $A$ on the rows indexed by $I$ and the columns indexed by $J$.\footnote{Put differently, $A_{I,J}$ removes from $A$ all rows in $[n]\sm I$ and all columns in $[n]\sm J$.}

\begin{theorem}\label{prop:prk-4}
    For any $4 \times 4$ matrix $A$ over any field,
    \begin{align}\label{eq:d4-expansion}
    \begin{split}
        \det(A) &= 
            \bigg(\sum_J \det(A_{\{1,2\},J}) \bigg)\bigg(\sum_J \det(A_{\{3,4\},J}) \bigg)\\
            &- \bigg(\sum_J \det(A_{\{1,3\},J}) \bigg)\bigg(\sum_J \det(A_{\{2,4\},J}) \bigg)\\
            &+ \bigg(\sum_J \det(A_{\{1,4\},J}) \bigg)\bigg(\sum_J \det(A_{\{2,3\},J}) \bigg) ,
    \end{split}
    \end{align}
    where all summations are over $J \in \binom{[4]}{2}$ (all choices of $2$ out of the $4$ columns).
\end{theorem}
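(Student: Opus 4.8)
The plan is to verify the identity~\eqref{eq:d4-expansion} by a direct, organized expansion, exploiting the structure of the $2\times 2$ minors. First I would observe that each factor $\sum_{J \in \binom{[4]}{2}} \det(A_{\{a,b\},J})$ is a sum of six terms, one for each pair of columns, and each such term $\det(A_{\{a,b\},J})$ is, up to sign, the $2\times 2$ minor using rows $a,b$ and columns in $J$. It is convenient to introduce for a pair of rows $P=\{a,b\}$ (with $a<b$) the vector of Plücker-type coordinates $p_P(A) = \sum_{J} \det(A_{P,J})$, so that the claimed identity reads
\[
\det(A) = p_{\{1,2\}}p_{\{3,4\}} - p_{\{1,3\}}p_{\{2,4\}} + p_{\{1,4\}}p_{\{2,3\}}.
\]
This is exactly the Plücker relation $\mathrm{Pl}_{12}\mathrm{Pl}_{34} - \mathrm{Pl}_{13}\mathrm{Pl}_{24} + \mathrm{Pl}_{14}\mathrm{Pl}_{23} = 0$ for the Grassmannian $\mathrm{Gr}(2,4)$, except that the $2\times 2$ minors have been ``smeared'' over all column pairs, and the right-hand side evaluates to $\det(A)$ rather than $0$. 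So the real content is to show that when one replaces each honest Plücker coordinate $\det(A_{P,J})$ by the column-summed quantity $p_P$, the quadratic expression picks up precisely $\det(A)$.

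The key step is a bookkeeping argument. Expanding the right-hand side, each monomial is a product $\pm\det(A_{P,J})\det(A_{P^c,J'})$ where $P^c = [4]\setminus P$ and $J,J' \in \binom{[4]}{2}$ are arbitrary (not necessarily complementary). I would split these into two cases: (i) $J' = [4]\setminus J$, in which case $\det(A_{P,J})\det(A_{P^c,J^c})$ is, up to a sign depending only on $J$ (and equal for all three choices of $P$ with the alternating signs of the Plücker relation built in), a term in the Laplace-type expansion of $\det(A)$ along the row-pair $P$ versus $P^c$; summing over the three $P$'s with signs $+,-,+$ and over $J$, the generalized Laplace expansion (Laplace expansion by complementary minors) shows these contribute exactly $\det(A)$. (ii) $J' \neq [4]\setminus J$, i.e. $J$ and $J'$ share at least one column; here I claim all such terms cancel. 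For these, grouping the terms appropriately, one recognizes each cancelling bundle as (a minor of) the Plücker relation for a $2\times 4$ or $3\times 4$ submatrix, or more directly as an instance of the classical identity that an ``alternating sum of products of minors sharing a repeated column'' vanishes — this is just the vanishing of a determinant with a repeated column in disguise. Concretely, fixing the multiset of columns used, the coefficient of each squarefree monomial in the matrix entries on the left (which is $0$, since $\det$ is multilinear and alternating in columns and no column is repeated — wait, columns aren't repeated in $\det$) must match; cleaner is to fix which columns appear and verify the signed sum is zero using that $\det_2$ is alternating.

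The main obstacle I expect is getting the signs right and setting up the case analysis so that case (ii) visibly collapses rather than requiring brute enumeration of $\binom{6}{1}\cdot 6 \cdot 3$ monomials. I would handle this by working with the ``smeared Plücker vector'' formulation: write the all-ones vector $\mathbf{1} = e_1+e_2+e_3+e_4$, and note that $p_P(A) = \det_2$ applied to the two rows of $A$ indexed by $P$, each first multiplied by... no — rather, $p_P(A)$ is the sum over $J$ of $2\times 2$ minors, which equals $\tfrac12$ times the $2\times 2$ minor of the $2\times 2$ matrix $(A_{P,[4]}\,\mathbf{1}_{4\times ?})$... This suggests the slicker route: let $B$ be the $4\times 4$ matrix whose columns are $A e_1, Ae_2, Ae_3, Ae_4$ — that's just $A$ — and instead consider pairing row-pairs with the column-sum functional. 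After a little thought, the cleanest proof is: substitute $A \mapsto A$ and expand both sides as polynomials in the $16$ entries $a_{ij}$; since both sides are multilinear in the four \emph{rows} of $A$ (the left-hand side obviously, the right-hand side because each $p_P$ is bilinear in rows $P$ and the two factors use complementary row-sets), it suffices to check equality when each row $x_i$ ranges over the standard basis $\{e_1,\dots,e_4\}$, i.e. to check the identity for all $4^4 = 256$ $0/1$ matrices with one $1$ per row — equivalently, for all functions $[4]\to[4]$ — and on these the left side is $\pm1$ for permutations and $0$ otherwise, while the right side is readily evaluated from the definition of $p_P$. Reducing to permutation matrices cuts the check to $24$ cases (and by the symmetric reduction / $S_4$-symmetry of the construction, essentially to a single orbit representative), which is entirely routine, and the non-permutation case (some column missing) is where the Plücker-type cancellation in (ii) does its job. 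I would present the multilinearity reduction first, then dispatch the finitely many cases, flagging the $S_4$-symmetry to keep the casework short.
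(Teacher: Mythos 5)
Your final plan---reduce by multilinearity in the rows to checking the identity on tuples of standard basis vectors, then cut the casework with symmetry---is exactly the paper's proof. The paper phrases the reduction as comparing the coefficient of each monomial $a_{1,i}a_{2,j}a_{3,k}a_{4,\ell}$, which turns~\eqref{eq:d4-expansion} into the combinatorial identity $\e_{i,j,k,\ell} = \e_{i,j}\e_{k,\ell} - \e_{i,k}\e_{j,\ell} + \e_{i,\ell}\e_{j,k}$ of Levi-Civita symbols, and then observes that both sides of that identity change sign under any transposition of $(i,j,k,\ell)$. This is the precise form of the ``$S_4$-symmetry'' you gesture at: the identity is not invariant but \emph{alternating} under permuting positions, so one need only verify two orbit representatives, namely $(i,j,k,\ell)=(1,2,3,4)$ and the case of a repeated index (reduced to $i=j$), and in each case the check is a one-liner.

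Two of your intermediate claims, however, do not hold as stated and should not appear in a write-up. First, the claim that the ``diagonal'' terms ($J'=J^c$) assemble into a generalized Laplace expansion is wrong: for a fixed row-pair $P$, the diagonal contribution $\sum_J \det(A_{P,J})\det(A_{P^c,J^c})$ has a uniform $+$ sign in $J$, whereas the Laplace expansion~\eqref{Laplace-two} carries the $J$-dependent sign $(-1)^{j_1+j_2-1}$, so the two polynomials genuinely differ (already the coefficients of $a_{1,1}a_{2,3}a_{3,1}a_{4,3}$-type monomials disagree). The three-term alternating sum of diagonals does in fact equal $\det(A)$, but that is precisely the ``all indices distinct'' case of the Levi-Civita identity and requires a proof of its own; no single Laplace expansion gives it. Second, the cancellation in your case (ii)---that the off-diagonal/column-repeating terms vanish---is where the actual content lies, and it is left as a hand-wave (``Plücker-type cancellation''). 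In the paper this is exactly the repeated-index case of the Levi-Civita identity, handled cleanly by the transposition covariance: move the two equal indices to positions $1,2$, and then $\e_{i,i,k,\ell}=0$ on the left while the right-hand side is $-\e_{i,k}\e_{i,\ell}+\e_{i,\ell}\e_{i,k}=0$ termwise. If you keep your multilinearity reduction, state the alternating property of the reduced identity explicitly and use it to dispatch both the repeated-index case and the reduction to a single permutation; then the proof coincides with the paper's.
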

\begin{corollary}\label{coro:det4}
    Over any field,
    $\prk(\det_4) = 3$.
\end{corollary}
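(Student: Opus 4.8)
The plan is to verify the identity~\eqref{eq:d4-expansion} directly and then combine it with the lower bound from Theorem~\ref{thm:prk-LB} (as recorded in Remark~\ref{remark:small-prk}) to conclude $\prk(\det_4)=3$.

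First I would observe that each of the six factors appearing on the right-hand side of~\eqref{eq:d4-expansion}, namely $P_{\{i,j\}} := \sum_{J \in \binom{[4]}{2}} \det(A_{\{i,j\},J})$, is a bilinear form in the two rows indexed by $i$ and $j$ (it is an alternating sum over all $2\times 2$ minors drawn from those two rows). Consequently each of the three products on the right is a product of two multilinear forms in disjoint pairs of row-vectors: $P_{\{1,2\}}P_{\{3,4\}}$, $P_{\{1,3\}}P_{\{2,4\}}$, and $P_{\{1,4\}}P_{\{2,3\}}$ together give a partition rank decomposition of length $3$ — \emph{once we know the identity holds}. So the heart of the matter is proving the identity; given that, $\prk(\det_4)\le 3$, while Remark~\ref{remark:small-prk} gives $\prk(\det_4)\ge 3$, and the corollary follows.

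To prove the identity I would expand each $P_{\{i,j\}}$ in terms of the matrix entries. Writing $A=(a_{k\ell})$, one has $P_{\{i,j\}} = \sum_{J=\{p,q\},\, p<q} (a_{ip}a_{jq}-a_{iq}a_{jp})$, a sum of $\binom{4}{2}=6$ signed monomials, each of bidegree $(1,1)$ in rows $i,j$. Multiplying $P_{\{1,2\}}P_{\{3,4\}}$ gives $36$ monomials, each a product of four entries, one from each row; similarly for the other two products. The claim is that, after the signs $+,-,+$ are applied, everything cancels except the $24$ monomials $\operatorname{sgn}(\sigma)a_{1\sigma(1)}a_{2\sigma(2)}a_{3\sigma(3)}a_{4\sigma(4)}$ that constitute $\det(A)$. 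A clean way to organize this is by the pattern of which columns are used: a surviving monomial must use all four columns (one per row), and one checks that each such permutation monomial arises exactly once and with the correct sign, while every ``collision'' monomial (two rows sharing a column) appears an even number of times with canceling signs. I expect this bookkeeping — matching the $\binom{4}{2}/2 = 3$ ways to pair up $\{1,2,3,4\}$ into two pairs with the structure of $S_4$ — to be the one genuinely computational step, though it is finite and mechanical; alternatively one could invoke the fact that both sides are multilinear alternating in the rows and agree on the standard basis, or recognize $P_{\{i,j\}}$ as (up to sign) the Plücker-type coordinate and the right-hand side as a Plücker relation for the Grassmannian, which would make the identity a known quadratic relation. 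The main obstacle is thus purely organizational: presenting the monomial cancellation transparently rather than as a brute-force check, but since the statement asserts it holds over \emph{any} field, it suffices to verify it as a polynomial identity with integer coefficients, which the monomial count settles.

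Finally I would assemble the corollary: the identity exhibits a partition rank decomposition of $\det_4$ with $3$ summands (each a product of two bilinear forms on complementary pairs of rows), hence $\prk(\det_4)\le 3$; combined with $\prk(\det_4)\ge 3$ from Theorem~\ref{thm:prk-LB}, we get $\prk(\det_4)=3$ over any field.
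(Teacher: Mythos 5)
Your proposal has the same overall structure as the paper's: the identity~\eqref{eq:d4-expansion} (Theorem~\ref{prop:prk-4}) gives $\prk(\det_4)\le 3$, and Theorem~\ref{thm:prk-LB} (via Remark~\ref{remark:small-prk}) gives $\prk(\det_4)\ge 3$. The difference lies in how the identity is verified. The paper compares coefficients of each monomial $a_{1,i}a_{2,j}a_{3,k}a_{4,\ell}$, reducing the claim to the ``$4$-to-$2$'' Levi-Civita identity $\e_{i,j,k,\ell}=\e_{i,j}\e_{k,\ell}-\e_{i,k}\e_{j,\ell}+\e_{i,\ell}\e_{j,k}$, which it then checks by exploiting its invariance under transpositions. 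Your primary route---expand all $3\times 36$ monomials and track cancellations---is essentially that same coefficient check organized less efficiently, and you leave the bookkeeping unfinished. Your suggested alternative, however, is correct and genuinely cleaner than what appears in the paper: the right-hand side of~\eqref{eq:d4-expansion} is multilinear in the four rows, changes sign under each adjacent row transposition $(12),(23),(34)$ (each such swap permutes the six bilinear forms $P_{\{i,j\}}$ and negates exactly one of them), and vanishes when two rows coincide (e.g.\ when rows $1,2$ agree, $P_{12}=0$, $P_{13}=P_{23}$, $P_{14}=P_{24}$, so the three terms cancel); hence it is a multilinear alternating $4$-form on $\F^4$ and therefore a scalar multiple of $\det_4$, and evaluating at $A=I$ gives the scalar $1$. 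Since this is an identity over $\Z$, it specializes to any field, including characteristic $2$. One caution on your aside: $P_{\{i,j\}}$ is a \emph{sum} of the six $2\times 2$ minors in rows $i,j$, not a single Pl\"ucker coordinate, so the right-hand side is not literally the Grassmannian quadric $p_{12}p_{34}-p_{13}p_{24}+p_{14}p_{23}$; the resemblance is suggestive, but you should not cite it as a known Pl\"ucker relation without more work.
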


This result is partially inspired by the results of Green-Tao~\cite{GreenTao09} and Lovett-Meshulam-Samorodnitsky~\cite{LovettMeSa11}, who showed that $S_4$, the elementary symmetric polynomial of degree $4$, is a counterexample to the naive version of the inverse conjecture for Gowers norms in small characteristic, namely over $\F_2$.
The connection to our question comes from the fact that over $\F_2,$ the multilinear form corresponding to $S_4$ in four variables is precisely the  $4 \times 4$ determinant.

As a quick example for computing the determinant using~(\ref{eq:d4-expansion}), consider the simple case of diagonal matrices:
\[
\det\begin{pmatrix}
    a & 0 & 0 & 0\\
    0 & b & 0 & 0\\
    0 & 0 & c & 0\\
    0 & 0 & 0 & d
\end{pmatrix}
= (ab)(cd) - (ac)(bd) + (ad)(bc).
\]

For the interested reader, we show below in Section~\ref{subsubsec:exp-comp} that our expansion~(\ref{eq:d4-expansion}) is genuinely different from the Laplace expansion and the generalized Laplace expansions.

At the core of our upper bound 
is an identity relating four-dimensional and two-dimensional Levi-Civita symbols.
The $d$-dimensional Levi-Civita symbol, 
for $i_1,\ldots,i_d \in [d]$, is
\[\e_{i_1,\ldots,i_d} = 
    \begin{cases}
        1 & (i_1,\ldots,i_d) \text{ is a permutation of even sign}\\
        -1 & (i_1,\ldots,i_d) \text{ is a permutation of odd sign}\\
        0 & \text{otherwise}
    \end{cases}.\]
Here, $(i_1,\ldots,i_d)$ is the (not necessarily permutation) map $1 \mapsto i_1, \ldots, d \mapsto i_d$.
The Levi-Civita symbol naturally generalizes to arbitrary integers $i_1,\ldots,i_d$, which we will use in the $2$-dimensional case; namely, for any $i,j$, 
\[\e_{i,j} = 
    \begin{cases}
        1 & i<j\\
        -1 & i>j\\
        0 & i=j
    \end{cases}.\]

\begin{proof}[Proof of Theorem~\ref{prop:prk-4}]
    First, we write out each determinant appearing in~(\ref{eq:d4-expansion}):
    \begin{align*}
    \begin{split}
        \det(A) &= 
            \Big(\sum_{i,j} \e_{i,j}a_{1,i}a_{2,j}\Big)\Big(\sum_{i,j} \e_{i,j}a_{3,i} a_{4,j} \Big)\\
            &- \Big(\sum_{i,j} \e_{i,j}a_{1,i}a_{3,j}\Big)\Big(\sum_{i,j} \e_{i,j}a_{2,i}a_{4,j} \Big)\\
            &+ \Big(\sum_{i,j} \e_{i,j}a_{1,i}a_{4,j}\Big)\Big(\sum_{i,j} \e_{i,j}a_{2,i}a_{3,j} \Big) ,
    \end{split}
    \end{align*}
    where all summations are over $i,j \in [4]$ (in fact, can assume $i\neq j$ as $\e_{i,i}=0$).
    Comparing the coefficients of $a_{1,i}a_{2,j}a_{3,k}a_{4,\ell}$ in both sides for each $(i,j,k,\ell)$, our expansion turns out to be equivalent to the following ``$4$-to-$2$'' identity of Levi-Civita symbols: for all $i,j,k,w \in [4]$,
    \begin{equation}\label{4-to-2}
        \e_{i,j,k,\ell} = \e_{i,j}\e_{k,\ell} - \e_{i,k}\e_{j,\ell} + \e_{i,\ell}\e_{j,k}.
    \end{equation}
    \[\]
    A direct inspection reveals that if the identity holds for $(i,j,k,\ell)$, then it also holds after transposing any two indices. For example,
    \[
    \e_{j,i,k,\ell} = -\e_{i,j,k,\ell} = -(\e_{i,j}\e_{k,\ell} - \e_{i,k}\e_{j,\ell} + \e_{i,\ell}\e_{j,k})
    = \e_{j,i}\e_{k,\ell} - \e_{j,k}\e_{i,\ell} + \e_{j,\ell}\e_{i,k}.
    \]
    Since all permutations are obtained by a sequence of transpositions, we may apply any desired permutation to $i,j,k,\ell$ in the course of our proof.
    
    If $i,j,k,\ell$ are not all distinct then by the above property we may assume $i=j$. Then the left hand side of equation \eqref{4-to-2} is $\e_{i,i,k,\ell}=0$, since $(i,i,k,\ell)$ is not a permutation, and the right hand side is $-\e_{i,k}\e_{i,\ell} + \e_{i,\ell}\e_{i,k} = 0$ also.
    
    If $i,j,k,\ell$ are all distinct, then because transpositions generate the group of permutations, we may assume $(i,j,k,\ell) = (1,2,3,4).$ In this case,
    the left hand side of equation \eqref{4-to-2} equals $1$,
    and the right hand side equals
    \[
    \e_{1,2}\e_{3,4} - \e_{1,3}\e_{2,4} + \e_{1,4}\e_{2,3}=1-1+1=1
    \]
    as well. This completes the proof.
\end{proof}

\begin{proof}[Proof of Corollary~\ref{coro:det4}]
    We have the lower bound $\prk(\det_4) \ge 3$ by Remark~\ref{remark:small-prk},
    and the upper bound $\prk(\det_4) \le 3$ from Theorem~\ref{prop:prk-4}.
\end{proof}

\subsubsection{Comparison to other expansions}\label{subsubsec:exp-comp}

For a $4 \times 4$ matrix $A$, the $2$-row Laplace expansion, along rows $I=\{1,2\}$ (say), is
\begin{equation}\label{Laplace-two}
   \det(A) = \sum_J (-1)^{j_1+j_2-1} \det(A_{I,J})\det(A_{I^c,J^c}),
\end{equation}
where the summation is over all $J \in \binom{[4]}{2}$ (all choices of $2$ out of the $4$ columns).\footnote{The coefficients can be written more generality as $(-1)^{\sum_\ell i_\ell+j_\ell}$.}
To formally separate our expansion~(\ref{eq:d4-expansion}) from the Laplace expansion or the generalized Laplace expansion~(\ref{Laplace-two}),
we extend both the preorder of Definition~\ref{reductions} and the notion of equivalence in Definition~\ref{srk-eq} to partition rank in the obvious way. 
Note that this preorder preserves degrees, and so our quadratic expansion~(\ref{eq:d4-expansion}) is certainly not equivalent to the slice rank decomposition given by the Laplace expansion along a single row.

\begin{lemma}\label{not-Laplace}
    The expansions~(\ref{eq:d4-expansion}) and~(\ref{Laplace-two})
    are not equivalent.
\end{lemma}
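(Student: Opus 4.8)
The plan is to exhibit an invariant that is preserved under the extended preorder $\lesssim$ (linear, syzygy, and symmetric reductions) and that distinguishes the two expansions. The natural candidate is the multiset of \emph{degree profiles} of the factors: for expansion~(\ref{eq:d4-expansion}) every summand is a product of two quadratics, so the profile is $\{(2,2),(2,2),(2,2)\}$, while the generalized Laplace expansion~(\ref{Laplace-two}) also has profile $\{(2,2),(2,2),(2,2)\}$. So the crude degree profile does not separate them, and I need a finer invariant. The refinement I would use is the \emph{span of the factors}: associate to a decomposition $\det_4 = \sum_i Q_i R_i$ the subspace $V \subseteq \F[x_1,\ldots,x_4]$ (polynomials in the matrix entries, graded by the four row-variables) spanned by all the $Q_i$ and $R_i$, or more precisely the collection of $1$-dimensional loci / the linear span of the quadratic factors inside the relevant graded piece. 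I would first check that the linear and syzygy reductions, which only add multiples of $\det_4$'s ``lower pieces'' or take linear combinations, change this span in a controlled way (for partition rank with all factors of degree $2$, the linear reduction is vacuous since linear forms of degree $0$ don't exist in a nontrivial way, and the syzygy reduction subtracts $q_{i,j}Q_j$ with $q_{i,j}$ of degree $0$, i.e. scalars — so it is just taking scalar combinations within a fixed span), and that the symmetric reduction replaces the span by its image under $A \in \mathrm{GL}_4$ acting on matrix entries preserving $\det_4$.

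The key step is therefore to compute, for each expansion, the invariant and show they differ even after the $\mathrm{GL}$-action. For~(\ref{Laplace-two}) each factor $\det(A_{I,J})$ is a $2\times 2$ minor — a monomial-sparse quadratic supported on exactly two rows and two columns — and crucially each such factor uses only \emph{two} of the four row-variables, with the two factors in each product using complementary pairs of rows but the \emph{same} pair appearing across different summands (rows $\{1,2\}$ and $\{3,4\}$ in all three terms). For~(\ref{eq:d4-expansion}), by contrast, the three summands use the three different partitions of $\{1,2,3,4\}$ into row-pairs: $\{12|34\}, \{13|24\}, \{14|23\}$. This is the structural distinction I would exploit: in~(\ref{Laplace-two}) there is a single pair of rows (namely $\{1,2\}$, or equivalently a single $2$-dimensional coordinate subspace of the row index set) such that \emph{every} summand has one factor depending only on those two rows; in~(\ref{eq:d4-expansion}) no such pair exists. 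I would phrase this as: the bipartite ``row-support pattern'' of~(\ref{eq:d4-expansion}) is the perfect matching polytope / the three perfect matchings of $K_4$, whereas~(\ref{Laplace-two})'s pattern is degenerate (one matching repeated).

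The main obstacle is making this row-support invariant genuinely invariant under the symmetric reduction, i.e. under $x \mapsto Ax$ and $x \mapsto A x^t$ — because a general $A \in \mathrm{GL}_4$ acting on the rows will scramble which ``two rows'' a factor depends on. The fix is to not use the literal variable-support but rather an intrinsic version: the set of $2$-dimensional subspaces $W \subseteq \F^4$ (in row-space) such that, after restricting the decomposition appropriately, some factor becomes identically determined by $W$; equivalently, look at the pattern of which $Q_i$ vanish on which coordinate subspaces of the ``rows'' and package this as a combinatorial structure on the Grassmannian $\mathrm{Gr}(2,4)$, which is then $\mathrm{GL}_4$-equivariant. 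Concretely, I expect the clean argument to be: observe that~(\ref{eq:d4-expansion}) is symmetric under the full symmetric group $S_4$ permuting rows (up to sign), inducing the $S_3$ action on the three perfect matchings, whereas~(\ref{Laplace-two}) has a much smaller stabilizer; since $\lesssim$-equivalence would force the symmetry groups to be related via the $\mathrm{GL}_4$-conjugation, and one can rule this out by a dimension or orbit count. Alternatively — and this may be the shortest route — I would directly compute the ideal generated by all the degree-$2$ factors in each expansion: for~(\ref{eq:d4-expansion}) the twelve factors $\sum_J \det(A_{\{a,b\},J})$ for the three pairs span a $\mathrm{GL}_4$-irreducible-looking space, while for~(\ref{Laplace-two}) the six $2\times 2$ minors of a fixed $2\times 4$ submatrix span a space that is \emph{not} closed under the $\mathrm{GL}_4$-action, and these two spans cannot be carried to one another by any change of variables preserving $\det_4$, which is a finite check once the spans are written down. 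I would carry out this last computation as the crux of the proof.
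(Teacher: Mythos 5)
The key miscount derails the whole plan: the generalized Laplace expansion~(\ref{Laplace-two}) has $\binom{4}{2}=6$ summands, not $3$. You wrote its degree profile as $\{(2,2),(2,2),(2,2)\}$, the same as for~(\ref{eq:d4-expansion}), and concluded that the crude profile fails to separate the two expansions. In fact, the number of nonzero summands \emph{is} the invariant that works, and it is exactly the one the paper uses. The paper shows that any decomposition $(\vec{q},\vec{r})\lesssim(\vec{g},\vec{h})$, where $(\vec{g},\vec{h})$ is the 2-row Laplace expansion, must still have $6$ nonzero summands. The two ingredients are: (i) the six $2\times 2$ minors $\det(A_{I,J})$, $J\in\binom{[4]}{2}$, are linearly independent quadratics, so neither a linear reduction nor a syzygy reduction can make a term vanish; and (ii) any linear map $T\colon M_4(\F)\to M_4(\F)$ with $\det_4\circ T=\det_4$ is necessarily invertible (if $T(A)=0$ for some $A\neq 0$, take an invertible $C$ sharing a row with $A$ and observe $0\neq\det_4(C)=\det_4(TC)=\det_4(T(C-A))=\det_4(C-A)=0$), so a symmetric reduction sends linearly independent quadratics to linearly independent quadratics. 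Hence the $3$-term expansion~(\ref{eq:d4-expansion}) cannot satisfy $(\ref{eq:d4-expansion})\lesssim(\ref{Laplace-two})$, and the two are not equivalent.

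Beyond the miscount, the alternative invariants you sketch are not established. You propose row-support patterns, structures on $\mathrm{Gr}(2,4)$, and a comparison of symmetry groups, but you never verify that any of these is actually preserved under the extended preorder $\lesssim$ (in particular under syzygy reductions, which add mixed terms $q_{i,j}Q_j$ and can change which variables a factor depends on), and your statement that ``$\lesssim$-equivalence would force the symmetry groups to be related via $\mathrm{GL}_4$-conjugation'' is asserted but not proved. You also end by deferring the essential step (``I would carry out this last computation as the crux of the proof''), so the argument as written is incomplete. Once the summand count is corrected, the much simpler length-based argument above closes the proof and the heavier machinery is unnecessary.
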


\begin{proof}
    Let $(\vec{g},\vec{h})$ denote Laplace expansion along any two rows $I$, given by an equation of the form~\eqref{Laplace-two}. 
    This partition rank decomposition of $\det_4$ has $\binom{4}{2}=6$ summands. We claim that the same is true whenever $(\vec{q},\vec{r}) \lesssim (\vec{g},\vec{h})$ is another partition rank decomposition. Establishing this claim will complete the proof.   

    First, note that the quadratics in $\vec{g}$ (i.e. the six $2\times 2$ minors $\det(A_{I,J})$) are linearly independent. Therefore, the decomposition $(\vec{g},\vec{h})$ cannot be shortened by a linear reduction or a syzygy reduction. 
    It thus remains to consider symmetric reductions. 
    We claim that any linear map $T:M_4(\F)\to M_4(\F)$ satisfying $\det_4\circ T = \det_4$ must be an isomorphism. This would imply that the entries of $\vec{g}\circ T$ remain linearly independent quadratics, as needed.
    To prove this claim, suppose for contradiction that there exists a matrix $0\neq A\in M_4(\F)$ satisfying $T(A) = 0.$ Let $C \in M_4(\F)$ be any invertible matrix that shares a row with $A.$ Then
    \[
    0\neq \det_4(C) = \det_4(T(C)) = \det_4 (T(C-A)) = \det_4(C-A) = 0,
    \]
    where the construction of $C$ was used in the first inequality and the last equality, 
    and the linearity of $T$ was used in the second equality.
    This completes the proof.
\end{proof}

As an aside, our expansion~(\ref{eq:d4-expansion}) is quite unremarkable as an arithmetic formula, when
compared to the Laplace expansion or to the two-row Laplace expansion~(\ref{Laplace-two}).
For example, its (leaf) size is $144$ ($24$ variable occurrences in each of the $6$ factors), compared to $64$ for the Laplace expansion, and $48$ for the generalized Laplace expansion.
It also does not improve on its competitors in terms of the number of products, for example. 
Moreover, when applied on random real matrices, it typically produces intermediate reals of  much larger magnitude than the original entries (as each of its factors has as many as $12$ monomials). 
Nevertheless, if we measure these expansions by their partition rank, then~(\ref{eq:d4-expansion}) is strictly better, and indeed best possible.


\section{Separating partition and analytic rank}

In this section we prove an asymptotic separation between partition rank and analytic rank, using Theorem~\ref{main:LB}.
On the other hand, we show that one cannot obtain a better separation---or really any separation at all---by considering random multilinear forms, even when they are conditioned to have high partition rank.

\subsection{Asymptotic separation}

\renewcommand{\T}[2]{\mathcal{T}_{{#1},{#2}}}
\newcommand{\Tr}[3]{\mathcal{T}_{{#1},{#2}}^{(#3)}}

We compute a tight estimate for the probability that $m$ independent, uniformly random vectors in $\F_q^n$ ($\F_q$ is the finite field of size $q$) are linearly dependent.
\begin{lemma}\label{lemma:rank-deficient}
    For any $m \le n$, 
    \[1 \le \Pr_{A \in \F_q^{m \times n}}[\rk(A) < m]/q^{-(n-m+1)} < 1 + 1/(q-1) .\]
\end{lemma}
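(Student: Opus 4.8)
The plan is to compute the probability that $m$ independent uniform vectors in $\F_q^n$ are linearly independent, and then complement and estimate. If $A \in \F_q^{m \times n}$ has rows chosen uniformly and independently, then $\rk(A)=m$ exactly when each row avoids the span of the previous ones; the $i$-th row (for $i=1,\dots,m$) lies outside a fixed $(i-1)$-dimensional subspace with probability $1 - q^{i-1}/q^n = 1 - q^{i-1-n}$. Hence
\[
\Pr[\rk(A)=m] = \prod_{i=1}^{m} \bigl(1 - q^{i-1-n}\bigr) = \prod_{j=n-m+1}^{n} (1 - q^{-j}),
\]
and therefore $\Pr[\rk(A)<m] = 1 - \prod_{j=n-m+1}^{n}(1-q^{-j})$. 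The task is to sandwich this quantity between $q^{-(n-m+1)}$ and $(1 + \tfrac{1}{q-1})\,q^{-(n-m+1)}$.

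For the lower bound, since every factor $1-q^{-j}$ lies in $(0,1)$, I would use the elementary inequality $\prod (1-x_j) \le 1 - \max_j x_j$ (or just $\le 1 - x_{j_0}$ for any single index), applied with the largest term $x_{j_0} = q^{-(n-m+1)}$, giving $\Pr[\rk(A)<m] \ge q^{-(n-m+1)}$ immediately. For the upper bound, I would use the complementary union-bound-type inequality $\prod_j (1-x_j) \ge 1 - \sum_j x_j$ for $x_j \in [0,1]$, so that
\[
\Pr[\rk(A)<m] \le \sum_{j=n-m+1}^{n} q^{-j} < \sum_{j=n-m+1}^{\infty} q^{-j} = \frac{q^{-(n-m+1)}}{1-q^{-1}} = \Bigl(1 + \frac{1}{q-1}\Bigr) q^{-(n-m+1)}.
\]
Dividing through by $q^{-(n-m+1)}$ yields the claimed two-sided bound, with the right inequality strict because the geometric tail is summed only finitely far. (When $m=n$ the index range still makes sense, starting at $j=1$, and both bounds hold verbatim.)

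There is no real obstacle here; the only thing to be slightly careful about is the direction of the two product inequalities and that $q^{-(n-m+1)}$ is genuinely the \emph{largest} of the factors' defects $q^{-j}$ over $j \in \{n-m+1,\dots,n\}$ (it is, since $j \mapsto q^{-j}$ is decreasing), which is what makes the one-term lower bound on the complement as large as possible. I would also double-check the edge behavior: the lower bound is an equality only in degenerate regimes, and the strictness of the upper bound is automatic from truncating the geometric series, so the stated chain $1 \le (\cdot)/q^{-(n-m+1)} < 1 + 1/(q-1)$ holds as written for all $m \le n$.
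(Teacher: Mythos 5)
Your proof is correct and follows essentially the same route as the paper: express $\Pr[\rk(A)=m]$ as the product $\prod_{j=n-m+1}^{n}(1-q^{-j})$ by conditioning row by row, bound the complement from below by dropping all but the dominant factor, and from above via the Bernoulli inequality $\prod(1-x_j)\ge 1-\sum x_j$ followed by the infinite geometric series. The only cosmetic difference is your indexing (by $j=n-i$ rather than $i$), and your explicit remark that the upper bound is strict because the geometric tail is truncated, which the paper leaves implicit.
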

\begin{proof} 
    The number of $m$-tuples $(v_1,\ldots,v_m)$ of linearly independent vectors in $\F_q^n$ is, as is well known, 
     $\prod_{i=0}^{m-1} (q^n-q^i)$, 
    since each $v_i$ ($i=1,\ldots,m$) can be any vector in $\F_q^n$ that lies outside the subspace spanned by 
    $v_1,\ldots,v_{i-1}$.  
    Thus,
    \[\Pr_{A \in \F_q^{m \times n}}[\rk(A)=m] = q^{-mn}\prod_{i=0}^{m-1} (q^n-q^i) 
    = \prod_{i=0}^{m-1} (1-1/q^{n-i}).\]
    To upper bound this probability, we bound $1-1/q^{n-i} \le 1$ for every $i<m-1$;
    to lower bound it, we use the generalized Bernoulli's inequality $\prod_i (1-x_i) \ge 1-\sum_i x_i$ for $0 \le x_i \le 1$.
    We get
    \[1- \sum_{i=0}^{m-1} 1/q^{n-i}
    \le \Pr_{A \in \F_q^{m \times n}}[\rk(A)=m] \le 1-1/q^{n-m+1}.\]
    Put $k=n-m+1$.
    For the geometric series on the left hand side, we have
    \[\sum_{i=0}^{m-1} 1/q^{n-i} = \sum_{i=k}^{n} 1/q^{j} 
    < q^{-k}/(1-1/q).\]
    Thus, as desired,
    $q^{-k} \le \Pr_{A \in \F_q^{m \times n}}[\rk(A) < m] < (1 + 1/(q-1))q^{-k}$. 
\end{proof}
  

\begin{corollary}\label{coro:ark-det}
    Over any finite field $\F$, and for any $n \ge 2$,
    \[\ceil{\ark(\det_n)} = 2.\]
\end{corollary}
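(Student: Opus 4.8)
The plan is to compute $\bias(\det_n)$ exactly via the identity $\bias(T) = \Pr[\nabla T = 0]$ mentioned in the excerpt, applied to $T = \det_n$ viewed as a multilinear form in the rows $x_1,\ldots,x_n \in \F_q^n$. Taking the gradient with respect to the last row $x_n$, we have $\nabla \det_n(x_1,\ldots,x_{n-1}) = (\partial \det_n/\partial (x_n)_i)_i$, which is precisely the vector of signed cofactors of the last row — equivalently, up to sign, the cross-product-type vector that is orthogonal to $x_1,\ldots,x_{n-1}$ and is nonzero exactly when $x_1,\ldots,x_{n-1}$ are linearly independent. So $\nabla \det_n(x_1,\ldots,x_{n-1}) = 0$ if and only if the $(n-1) \times n$ matrix with rows $x_1,\ldots,x_{n-1}$ has rank $< n-1$.

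First I would make this cofactor computation precise: expanding $\det_n$ along the last row, $\det_n = \sum_i (x_n)_i \cdot C_i$ where $C_i = (-1)^{n+i}\det((x_{jk})_{j<n, k\neq i})$, so $\partial \det_n/\partial (x_n)_i = C_i$, and the claim that $(C_i)_i$ vanishes iff $x_1,\ldots,x_{n-1}$ are linearly dependent is standard linear algebra (the vector $(C_i)_i$ spans the orthogonal complement of $\Sp\{x_1,\ldots,x_{n-1}\}$ when that span is a hyperplane, and is zero otherwise). Then, by the Fourier-analytic identity, $\bias(\det_n) = \Pr_{A \in \F_q^{(n-1)\times n}}[\rk(A) < n-1]$. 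Now apply Lemma~\ref{lemma:rank-deficient} with $m = n-1$, which gives $q^{-(n-(n-1)+1)} = q^{-2} \le \bias(\det_n) < (1 + 1/(q-1)) q^{-2}$.

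Finally I would convert this two-sided estimate on the bias into the statement about $\ceil{\ark(\det_n)}$. By definition $\ark(\det_n) = -\log_q \bias(\det_n)$, so the bounds on $\bias$ give $2 - \log_q(1 + 1/(q-1)) < \ark(\det_n) \le 2$. Since $1 + 1/(q-1) < q$ for every prime power $q \ge 2$ (indeed $1 + 1/(q-1) \le 2 \le q$), we have $\log_q(1+1/(q-1)) < 1$, hence $1 < \ark(\det_n) \le 2$, and therefore $\ceil{\ark(\det_n)} = 2$.

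The only genuinely delicate point is justifying $\bias(T) = \Pr[\nabla T = 0]$ and identifying the gradient of the determinant with the rank-deficiency event; both are essentially bookkeeping (the first is the quoted Fourier identity, the second is the cofactor expansion plus the rank characterization of the cofactor vector), so I expect no real obstacle — the main thing to be careful about is that the gradient is taken in the correct variable (one full row) so that the resulting matrix is genuinely $(n-1)\times n$ and Lemma~\ref{lemma:rank-deficient} applies with $m = n-1 \le n$. It is also worth noting that the argument uses $n \ge 2$ precisely so that the gradient is a nonconstant object (for $n = 1$ the determinant is linear and $\bias = 0$), consistent with the hypothesis.
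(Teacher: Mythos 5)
Your proof is correct and follows essentially the same route as the paper: both take the gradient with respect to the last row, identify its vanishing with rank deficiency of the $(n-1)\times n$ matrix of the remaining rows, invoke Lemma~\ref{lemma:rank-deficient} with $m=n-1$, and conclude $1 < \ark(\det_n) \le 2$ from the resulting two-sided bound on the bias (your constant $1+1/(q-1)$ is the same as the paper's $c_q = q/(q-1)$). The only cosmetic difference is that you phrase the rank-deficiency equivalence via the cross-product/orthogonal-complement characterization of the cofactor vector, whereas the paper phrases it via the vanishing of all $(n-1)\times(n-1)$ minors.
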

\begin{proof}
    Put $\F=\F_q$.
    For convenience, write $\det_n = \det(A)$ where $A=(a_{i,j})$ is a general $n \times n$ matrix, whose rows are the $n$ variable vectors of $\det_n$.
    The gradient of $\det_n$ with respect to the last row of $A$ is, by the Laplace expansion, 
    \[\nabla \det_n = \big((-1)^{n+1}\det(A_{n,1}),\ldots,(-1)^{n+n}\det(A_{n,n})\big),\]
    where $A_{i,j}$ is obtained from $A$ by removing row $i$ and column $j$.
    Let $A'$ denote the $(n-1)\times n$ matrix obtained from $A$ by removing row $n$.
    Then $\nabla \det_n(A')=0$ if and only if every $(n-1)\times(n-1)$ minor of $A'$ is zero, which is equivalent to $\rk(A') < n-1$.
    In other words,
    \[\Pr[\nabla \det_n = 0] 
    = \Pr_{A' \in \F_q^{(n-1) \times n}}[\rk(A') < n-1].\]
    Lemma~\ref{lemma:rank-deficient}, in the special case $m=n-1$, says
    $q^{-2}
    \le \Pr[\nabla \det_n = 0] 
    < c_q q^{-2}$
    with $c_q=q/(q-1)$.
    Therefore,
    $2 - \log_q c_q < \ark(\det_n) \le 2$.
    Trivially $c_q \le q$,
    so $1 < \ark(\det_n) \le 2$, 
    that is, $\ceil{\ark(\det_n)} = 2$.
%
\end{proof}

\begin{corollary}\label{coro:Cd}
    For any $d \ge 2$,
    \[A(d) \ge \frac{\log_2(d)+1}{2} 
    \xrightarrow[d \to \infty]{} \infty .\]
\end{corollary}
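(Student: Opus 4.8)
The plan is to derive Corollary~\ref{coro:Cd} as an immediate consequence of Theorem~\ref{thm:prk-LB} together with Corollary~\ref{coro:ark-det}, using only the definition of the extremal ratio $A(d)$. Recall that $A(d) = \max_T \prk(T)/\lceil \ark(T) \rceil$, where the maximum ranges over all $d$-linear forms $T$. The idea is simply to plug in $T = \det_d$, which is a legitimate $d$-linear form (it is multilinear in the $d$ rows of a $d \times d$ matrix, which we may view as vectors in $\F^d$).

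First I would invoke Corollary~\ref{coro:ark-det}, which gives $\lceil \ark(\det_d) \rceil = 2$ for every $d \ge 2$, over any finite field. Next I would invoke Theorem~\ref{thm:prk-LB}, which gives $\prk(\det_d) \ge \lg(d)+1 = \log_2(d)+1$. Since $\prk$ is integer-valued, this in fact yields $\prk(\det_d) \ge \lceil \log_2(d)+1 \rceil$, but the weaker bound suffices. Combining these,
\[
A(d) \ge \frac{\prk(\det_d)}{\lceil \ark(\det_d) \rceil} \ge \frac{\log_2(d)+1}{2},
\]
which is exactly the claimed inequality. The limit statement $\frac{\log_2(d)+1}{2} \to \infty$ as $d \to \infty$ is immediate since $\log_2(d) \to \infty$.

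There is essentially no obstacle here: the corollary is a one-line deduction from results already established in the excerpt. The only point requiring a moment's care is confirming that $\det_d$ is eligible as a competitor in the maximum defining $A(d)$ — i.e., that a $d \times d$ determinant is genuinely a $d$-linear form on $(\F^d)^d$ — which is the standard fact that the determinant is multilinear and alternating in its rows. One might also note, as the paper's footnote does, that the lower bound on $A(d)$ holds over any finite field since both Theorem~\ref{thm:prk-LB} and Corollary~\ref{coro:ark-det} are field-independent. So the proof is just: apply Theorem~\ref{thm:prk-LB} to the numerator, Corollary~\ref{coro:ark-det} to the denominator, and observe the resulting quotient diverges.
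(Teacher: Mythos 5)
Your proof is correct and follows exactly the same route as the paper: substitute $T=\det_d$ into the definition of $A(d)$, apply Theorem~\ref{thm:prk-LB} to the numerator and Corollary~\ref{coro:ark-det} to the denominator, and observe the resulting lower bound diverges.
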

\begin{proof}
    Combining Theorem~\ref{thm:prk-LB} and Corollary~\ref{coro:ark-det}, we get
    \[A(d) 
    = \max_{\text{$d$-linear } T} \frac{\prk(T)}{\ceil{\ark(T)}}
    \ge \frac{\prk(\det_d)}{\ceil{\ark(\det_d)}} \ge \frac{\log_2(d)+1}{2}. \qedhere\]
\end{proof}

\subsection{Random multilinear forms}

Denote by $\T{n}{d}(\F)$ the set of $d$-linear forms $T \colon (\F^n)^d \to \F$.
Let $\Tr{n}{d}{r}(\F)$ be the distribution obtained by summing $r$ reducible forms chosen independently and uniformly at random from $\T{n}{d}(\F)$.
Clearly, this distribution satisfies $\Pr[\prk(T)\le r]=1$, 
and so $\Pr[\ark(T)\le r]=1$.
We will prove that, over any finite field $\F$, the analytic rank of these multilinear forms is roughly $r$ with high probability. Intuitively, this means tensors $T$ with $\prk(T)/\ark(T) >2$ (say) are \emph{rare}, in the sense that only a $o(1)$-fraction of them satisfies this inequality, even when the numerator is fixed.

\begin{remark}
    Other properties of random elements of $\Tr{n}{d}{r}(\F)$ were previously studied in \cite{Bal-Bik, Erman}.  
\end{remark}


We use the following asymptotic notation in $n$: $A \sim B$ if\footnote{Not to be confused with the probabilistic notation of ``distributed as''.} $\lim_{n \to \infty} A/B = 1$ (that is, $A = (1+o(1))B$),
$A \gtrsim B$ if $\lim_{n \to \infty} A/B \ge 1$ (that is, $A \ge (1+o(1))B$),
and $A \lesssim B$ analogously.
\begin{theorem}\label{thm:random-ark}
    Let $\F$ be a finite field, $d \ge 2$, and $\e>0$.
    If $r \le (1-\e)n/2$ then
    \[\E_{T \sim \Tr{d}{n}{r}(\F)}\, \bias(T) \sim |\F|^{-r}.\]
\end{theorem}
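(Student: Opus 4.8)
The plan is to expand $\bias(T)$ for $T=\sum_{i=1}^r P_i$ via the standard Fourier identity $\bias(S)=\Pr[\nabla S=0]$, which for a sum of forms in independent variable-sets does not literally apply (the $P_i$ need not be in disjoint variables), so instead I would work directly with the bias as an average of multiplicative characters. Writing $\bias(T)=\E_{x}\psi(T(x))$ for a fixed nontrivial additive character $\psi$ (up to the standard normalization relating $\bias$ to $\Pr[T=0]-\Pr[T=y]$), linearity of expectation over the $r$ independent random reducible forms gives
\[
\E_T\,\bias(T)=\E_{x}\prod_{i=1}^r \E_{P_i}\psi(P_i(x)),
\]
so the whole computation reduces to understanding, for a \emph{single} uniformly random reducible $d$-linear form $P$ (i.e.\ $P=Q\cdot R$ with $Q,R$ uniform multilinear in a uniformly chosen complementary pair of variable-blocks), the quantity $c(x):=\E_P\psi(P(x))$ as a function of the evaluation point $x=(x^{(1)},\dots,x^{(d)})\in(\F^n)^d$.

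The key observation I would establish is that $c(x)$ depends on $x$ only through the ranks (dimensions of spans) of the vector tuples: once we condition on the partition of $[d]$ into the two blocks $S,S^c$ used by $P$, the form $Q$ is a uniformly random multilinear form on $\prod_{j\in S}\F^n$, and $Q(x^{(j)}:j\in S)$ is a uniformly random element of $\F$ \emph{unless} the vectors $\{x^{(j)}:j\in S\}$ are linearly dependent in a way that forces $Q$ to vanish — more precisely, $\E_Q\psi(Q((x^{(j)})_{j\in S}))=1$ if the tuple is ``degenerate'' (some $x^{(j)}=0$, or more generally the multilinear monomials all collapse) and $0$ otherwise; the cleanest statement is that this expectation equals $\mathbf 1[\,\exists j\in S: x^{(j)}=0\,]$ since a uniformly random multilinear form takes a uniform value at any point with all coordinates present. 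Hence $c(x)=\E_{S}\big(\mathbf 1[\exists j\in S: x^{(j)}=0]+\mathbf 1[\exists j\in S^c: x^{(j)}=0]-\mathbf 1[\text{both}]\big)$-type expression, and in particular $c(x)=1$ exactly when \emph{every} $x^{(j)}=0$, i.e.\ $x=0$; otherwise $c(x)$ is a fixed constant in $[0,1)$ bounded away from $1$ determined by how many of the $x^{(j)}$ vanish. Plugging back, $\E_T\bias(T)=\E_x\prod_i c(x)$ is dominated by the single point $x=0$, which contributes $|\F|^{-dn}$ times... — wait, that is the wrong normalization; the correct bookkeeping is that $\bias$ already lives on $(\F^n)^d$, so I would instead track: the contribution of $x=0$ is exactly $|\F|^{-r}$ after the correct normalization (this is where the target exponent $-r$ comes from, not $-dn$), and the contribution of all $x\neq 0$ must be shown to be $o(|\F|^{-r})$.

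So the technical heart is a second-moment / union-bound estimate: for $x\neq 0$, $\prod_{i=1}^r c(x)=\gamma(x)^r$ where $\gamma(x)<1$ depends on the ``support pattern'' of $x$ (which of the $d$ vectors are zero), and I must show $\sum_{x\neq 0}|\F|^{-dn}\gamma(x)^r=o(|\F|^{-r})$. Splitting by support pattern: if exactly $t$ of the $d$ vectors are nonzero ($1\le t\le d$), there are at most $|\F|^{tn}$ such $x$, and $\gamma(x)$ equals the probability that the random split $S$ puts all $t$ nonzero-index among one side — this is roughly $2^{-t}$-ish, definitely a constant $\gamma_t<1$. The worst case is $t=d$ (all nonzero), giving $|\F|^{dn}\gamma_d^r$, and one needs $\gamma_d^r \cdot |\F|^{dn}\cdot|\F|^{-dn}=\gamma_d^r=o(|\F|^{-r})$?? — that fails unless $\gamma_d<|\F|^{-1}$, which is false. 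The resolution, and the \textbf{main obstacle}, is that the naive ``$x^{(j)}\neq 0$'' dichotomy is too coarse: one must use the finer fact that $\E_Q\psi(Q(\cdot))$ vanishes whenever the $S$-tuple is merely \emph{linearly dependent enough}, and more importantly exploit independence across the $r$ factors together with Lemma~\ref{lemma:rank-deficient} to see that for $r\le(1-\e)n/2$ the relevant bad event (all $r$ random forms simultaneously failing to ``see'' a generic direction) has probability decaying faster than $|\F|^{-dn+r}$. Concretely I expect the right argument to mirror Corollary~\ref{coro:ark-det}: reduce to estimating $\Pr_{A}[\rk(A)<\text{full}]$ for random matrices of size governed by $r$, invoke Lemma~\ref{lemma:rank-deficient} to get the $|\F|^{-(n-m+1)}$-type bound, and choose the bookkeeping so that the dominant term is exactly $|\F|^{-r}$ with everything else absorbed into the $o(1)$; the condition $r\le(1-\e)n/2$ is precisely what makes the error geometric series converge. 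I would carry out the steps in the order: (1) Fourier expansion and reduction to a single random reducible form; (2) compute $\E_P\psi(P(x))$ in terms of linear-(in)dependence data; (3) isolate the $x=0$ term as $|\F|^{-r}$; (4) bound the $x\neq 0$ tail using Lemma~\ref{lemma:rank-deficient} and the hypothesis on $r$.
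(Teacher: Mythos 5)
Your overall plan—expand $\bias(T)=\E_x\psi(T(x))$ with an additive character, use independence of the $r$ summands to get $\E_T\bias(T)=\E_x\prod_i\E_{P_i}\psi(P_i(x))=\E_x\, c(x)^r$, and then analyze $c(x)=\E_P\psi(P(x))$ pointwise—is a genuinely different route from the paper, which works instead with the gradient identity $\bias(T)=\Pr[\nabla T=0]$, the decomposition $\nabla T=\sum_i S_i\nabla R_i$ in the last variable, and a rank-deficiency estimate (Lemma~\ref{lemma:rank-deficient}) to control the ``bad'' event that the $\nabla R_i(x)$ are linearly dependent. Your Fourier route is in principle cleaner, and if executed correctly would even improve on the stated theorem (requiring only $r\le n-\omega(1)$ rather than $r\le(1-\e)n/2$). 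However, you hit a genuine gap exactly where the content is.

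The error is in the computation of $c(x)$, and it traces to an implicit assumption that the additive character splits across the multiplicative structure of the reducible form $P=QR$. It does not: $\psi(Q(x_S)R(x_{S^c}))\neq\psi(Q(x_S))\psi(R(x_{S^c}))$, so you cannot analyze $\E_Q\psi(Q(x_S))$ and $\E_R\psi(R(x_{S^c}))$ separately and combine by indicators as in your ``$\E_S(\mathbf 1[\cdots]+\mathbf 1[\cdots]-\mathbf 1[\text{both}])$'' expression. The correct computation is short: for a nontrivial $x$ (all $x^{(j)}\neq 0$), Lemma~\ref{lemma:nontrivial-pt} gives that $Q(x_S)$ and $R(x_{S^c})$ are independent uniform scalars $a,b\in\F$, and
\[
\E_{a,b\ \text{iid unif.}}\ \psi(ab)\ =\ \Pr[a=0]\cdot 1+\Pr[a\neq 0]\cdot\E_b\psi(ab)\ =\ \tfrac1q\cdot 1 + \tfrac{q-1}{q}\cdot 0\ =\ \tfrac1q,
\]
so $c(x)=1/q$ for nontrivial $x$, not $0$. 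For trivial $x$ (some $x^{(j)}=0$) one has $P(x)=0$ for every $P$, so $c(x)=1$. Hence $c(x)$ is \emph{not} governed by how many coordinates vanish or by a random-split probability $\gamma_t\approx 2^{-t}$; it is a simple two-valued quantity, and the worry you raise about ``$\gamma_d^r=o(|\F|^{-r})$ failing'' dissolves once you have $\gamma=1/q$ exactly. With the correct $c(x)$,
\[
\E_T\bias(T)=\E_x\, c(x)^r = \big(1-\Pr[x\ \text{trivial}]\big)q^{-r}+\Pr[x\ \text{trivial}],
\]
and since $\Pr[x\ \text{trivial}]\le d\,q^{-n}=o(q^{-r})$ under the hypothesis, the conclusion $\E_T\bias(T)\sim q^{-r}$ follows immediately. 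The closing speculation about mirroring Corollary~\ref{coro:ark-det} and Lemma~\ref{lemma:rank-deficient} describes what the paper's (different, gradient-based) proof does, but it does not repair the wrong value of $c(x)$ in your own framework, and it is not needed once $c(x)$ is computed correctly.
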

\begin{corollary}
    In the conditions of Theorem~\ref{thm:random-ark}, for any $c>0$,
    \[\Pr_{T \sim \Tr{d}{n}{r}(\F)}[\,r-c \le \ark(T) \le r\,] \gtrsim 1-q^{-c}.\]
\end{corollary}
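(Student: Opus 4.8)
The plan is to deduce the corollary from Theorem~\ref{thm:random-ark} by a first-moment (Markov) argument, combined with the observation that $\bias(T) \le 1$ always holds. Write $q = |\F|$. By definition, $\ark(T) = -\log_q \bias(T)$, so the event $\ark(T) > r$ is impossible (since $\Pr[\ark(T) \le r]=1$ as noted right before the theorem), and the event $\ark(T) < r-c$ is exactly the event $\bias(T) > q^{-(r-c)} = q^c \cdot q^{-r}$. Thus it suffices to upper bound $\Pr[\bias(T) > q^c q^{-r}]$.

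The key step is Markov's inequality applied to the nonnegative random variable $\bias(T)$ (note $\bias(T) = \Pr[T=0] - \Pr[T=y] \ge 0$ because a uniformly random point is at least as likely to be a zero of a multilinear form as to take any fixed nonzero value — or one can simply cite that $\bias(T)\ge 0$, which follows from $\bias(T) = \E[\omega^{T}]$ being a nonnegative real via the Fourier/gradient identity $\bias(T) = \Pr[\nabla T = 0]$ quoted in the introduction). Applying Markov with the threshold $q^c q^{-r}$ and the bound $\E\,\bias(T) \sim q^{-r}$ from Theorem~\ref{thm:random-ark} gives
\[
\Pr_{T \sim \Tr{d}{n}{r}(\F)}[\bias(T) > q^c q^{-r}] \le \frac{\E\,\bias(T)}{q^c q^{-r}} \sim \frac{q^{-r}}{q^c q^{-r}} = q^{-c}.
\]
Hence $\Pr[\ark(T) < r-c] = \Pr[\bias(T) > q^c q^{-r}] \lesssim q^{-c}$, and therefore $\Pr[r-c \le \ark(T) \le r] = \Pr[\ark(T) \ge r-c] \ge 1 - \Pr[\ark(T) < r-c] \gtrsim 1 - q^{-c}$, using once more that $\ark(T) \le r$ holds with probability $1$.

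I do not expect any genuine obstacle here: the only mild subtlety is the direction of the asymptotic inequalities ($\sim$ versus $\gtrsim$), which just needs care in how the $1+o(1)$ factor from Theorem~\ref{thm:random-ark} propagates through Markov — the factor $(1+o(1))$ in the numerator only weakens the bound to $(1+o(1))q^{-c}$, which is exactly what $\lesssim q^{-c}$ and hence $\gtrsim 1-q^{-c}$ encode. The one thing worth double-checking is nonnegativity of $\bias(T)$ so that Markov's inequality is legitimate; this is standard and already implicitly used in the paper via the identity $\bias(T)=\Pr[\nabla T = 0]$.
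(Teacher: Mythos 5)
Your proof is correct and is essentially identical to the paper's: both apply Markov's inequality to the nonnegative random variable $\bias(T)$, invoke $\E\,\bias(T)\sim q^{-r}$ from Theorem~\ref{thm:random-ark}, and use that $\ark(T)\le r$ holds with probability $1$. The only cosmetic difference is that you bound $\Pr[\ark(T)<r-c]$ while the paper bounds $\Pr[\ark(T)\le r-c]$, which changes nothing.
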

\begin{proof}
    Put $\F=\F_q$.
    By Markov's inequality on the random variable $\bias(T) \ge 0$,
    \[\Pr[\ark(T) \le r - c] = \Pr[\bias(T) \ge q^{-(r-c)}] \le q^{r-c}\E\bias(T) \sim q^{-c}. \qedhere\]
\end{proof}

\begin{remark}
    Our proof works for an arbitrary finite field $\F$.
    As fFor an infinite field $\F$, an essentially identical argument shows that a \emph{generic} $T \in \Tr{d}{n}{r}(\F)$ has geometric rank $r.$  
\end{remark}



We begin with some auxiliary lemmas.
Henceforth, we say that $x \in (\F^n)^d$ is \emph{nontrivial} if $x^{(i)} \neq 0$ for every $i$.
We will also need the following observation about evaluating a random multilinear form.
\begin{lemma}\label{lemma:nontrivial-pt}
    Let $T \in \T{d}{n}$ be chosen uniformly at random.
    For any $x \in (\F^n)^d$ nontrivial,
    $T(x)$ is distributed uniformly in $\F$.
\end{lemma}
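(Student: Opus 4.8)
The statement claims that for a uniformly random $d$-linear form $T \in \T{d}{n}$ and any \emph{nontrivial} point $x = (x^{(1)},\ldots,x^{(d)})$ (so every $x^{(i)} \neq 0$), the value $T(x) \in \F$ is uniform. The natural approach is to write $T$ in coordinates and exhibit a single coefficient whose marginal randomness already makes $T(x)$ uniform, conditioning on all the others. Concretely, a $d$-linear form is determined by its coefficient tensor $(c_{j_1,\ldots,j_d})$ indexed by $(j_1,\ldots,j_d) \in [n]^d$, with these $n^d$ coefficients chosen independently and uniformly from $\F$, and $T(x) = \sum_{j_1,\ldots,j_d} c_{j_1,\ldots,j_d}\, x^{(1)}_{j_1}\cdots x^{(d)}_{j_d}$.

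\textbf{Key steps.} First, since $x^{(i)} \neq 0$ for each $i$, pick an index $k_i \in [n]$ with $x^{(i)}_{k_i} \neq 0$; then the product $a := x^{(1)}_{k_1}\cdots x^{(d)}_{k_d}$ is a nonzero element of $\F$. Second, isolate the single coefficient $c^* := c_{k_1,\ldots,k_d}$ and write $T(x) = a\, c^* + S$, where $S$ collects all the remaining terms and depends only on the other $n^d - 1$ coefficients. Third, condition on those other coefficients: $S$ becomes a fixed element of $\F$, while $c^*$ remains uniform in $\F$ (independence), so $a\, c^* + S$ is a uniform element of $\F$ because $t \mapsto a t + S$ is a bijection of $\F$ (as $a \neq 0$). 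Fourth, since the conditional distribution of $T(x)$ is uniform regardless of the conditioning, the unconditional distribution of $T(x)$ is uniform as well. That finishes the proof.

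\textbf{Main obstacle.} There is essentially no obstacle here — the lemma is a routine ``random coefficient makes a linear combination uniform'' argument; the only thing to be careful about is that ``nontrivial'' is exactly the hypothesis needed to guarantee $a \neq 0$, which is what lets us use invertibility of the affine map $t \mapsto at + S$. If one wanted to be slightly slicker, one could instead use a basis change sending each $x^{(i)}$ to a standard basis vector $e_1$ (legitimate since each $x^{(i)} \neq 0$), under which $T(x)$ becomes the single coefficient $c_{1,\ldots,1}$ of the transformed form, which is uniform because the coordinate change is a bijection on the space of $d$-linear forms; but the direct coefficient argument above is the cleanest to write out.
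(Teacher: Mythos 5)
Your proof is correct and takes essentially the same route as the paper: pick indices $k_1,\ldots,k_d$ witnessing nontriviality so that the monomial $a=x^{(1)}_{k_1}\cdots x^{(d)}_{k_d}$ is nonzero, then observe that conditioning on all coefficients except $c_{k_1,\ldots,k_d}$ leaves $T(x)$ an invertible affine function of a uniform scalar. The paper phrases this via the explicit bijection $y \mapsto c_J = (y - \sum_{I\neq J} c_I X_I)/X_J$ rather than by conditioning, but the underlying argument is identical.
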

\begin{proof}
    We have
    $T(x^{1},\ldots,x^{(d)}) = \sum_{i_1,\ldots,i_d} c_{i_1,\ldots,i_d} x^{(1)}_{i_1}\cdots x^{(d)}_{i_d}$ with $c_{i_1,\ldots,i_d} \in \F$ chosen uniformly at random.
    Abbreviate $X_{i_1,\ldots,i_d} = x^{(1)}_{i_1}\cdots x^{(d)}_{i_d}$.
    Since $x$ is nontrivial, for every $1 \le i \le d$ there is $1 \le j_i \le n$ such that $x^{i}_{j_i} \neq 0$. Put $J = (j_1,\ldots,j_d)$.
    We deduce that $T(x) \in \F$ is distributed uniformly since $c_J X_J$ is: 
    for every $y \in \F$, we have $T(x)=y$ if and only if 
    $c_J = (y - \sum_{I \neq J} c_IX_I)/X_J$.
\end{proof}

We will later use the following elementary observations to bound the effects of low probability (``bad'') events.
\begin{lemma}\label{lemma:diff}
    Let $A,B,E$ be finite sets.
    If $|A \sm E| = |B \sm E|$
    then $\Big||A|-|B|\Big| \le |E|$.
\end{lemma}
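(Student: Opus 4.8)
The plan is to prove Lemma~\ref{lemma:diff} by a direct counting argument using the inclusion–exclusion structure of symmetric differences. First I would observe that the hypothesis $|A\sm E| = |B\sm E|$ lets me compare $|A|$ and $|B|$ after splitting each according to membership in $E$: write $|A| = |A\sm E| + |A\cap E|$ and $|B| = |B\sm E| + |B\cap E|$. Subtracting and using the hypothesis gives $|A| - |B| = |A\cap E| - |B\cap E|$.

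The key step is then to bound $|A\cap E| - |B\cap E|$ in absolute value. Since both $A\cap E$ and $B\cap E$ are subsets of $E$, we have $0 \le |A\cap E| \le |E|$ and $0 \le |B\cap E| \le |E|$. Hence $|A\cap E| - |B\cap E|$ lies in the interval $[-|E|, |E|]$, so $\big|\,|A\cap E| - |B\cap E|\,\big| \le |E|$. Combining with the identity from the previous paragraph yields $\big|\,|A| - |B|\,\big| \le |E|$, as claimed.

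There is essentially no obstacle here; this is a routine finite-set identity, and the only thing to be careful about is correctly handling the decomposition of $A$ and $B$ into their parts inside and outside $E$. I would present it in three or four lines: the decomposition identities, the cancellation using the hypothesis, and the trivial bound on each intersection with $E$. In LaTeX the proof reads as follows.

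\begin{proof}
    Partition each set according to membership in $E$:
    \[|A| = |A\sm E| + |A\cap E|, \qquad |B| = |B\sm E| + |B\cap E|.\]
    Subtracting these two identities and using the hypothesis $|A\sm E| = |B\sm E|$ gives
    \[|A| - |B| = |A\cap E| - |B\cap E|.\]
    Since $A\cap E$ and $B\cap E$ are both subsets of $E$, we have $0 \le |A\cap E| \le |E|$ and $0 \le |B\cap E| \le |E|$, whence $\big||A\cap E| - |B\cap E|\big| \le |E|$. Combining the two displays completes the proof.
\end{proof}
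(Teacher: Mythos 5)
Your proof is correct and follows essentially the same approach as the paper: both start from the decomposition $|A| = |A\sm E| + |A\cap E|$ (and likewise for $B$) and use the hypothesis to cancel the parts outside $E$. The paper derives the two one-sided bounds $|A| \ge |B| - |E|$ and $|A| \le |B| + |E|$ directly, while you arrive at the same conclusion by isolating $|A\cap E| - |B\cap E|$; the difference is purely presentational.
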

\begin{proof}
    We have
    $|A| = |A \sm E| + |A \cap E| = |B \sm E| + |A \cap E|$
    by assumption.
    This implies the bound 
    $|A| \ge |B \sm E| \ge |B| - |E|$,
    and the bound
    $|A| \le |B| + |E|$.
\end{proof}

\begin{lemma}\label{lemma:diff-1}
    For events $A$ and $E$, $\big|\Pr[A]-\Pr[A \vert E^c]\big| \le \Pr[E]$.
\end{lemma}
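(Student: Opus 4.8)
The statement to prove is Lemma~\ref{lemma:diff-1}: for events $A$ and $E$, $\big|\Pr[A]-\Pr[A \mid E^c]\big| \le \Pr[E]$. The plan is to expand $\Pr[A]$ by the law of total probability over $E$ and $E^c$, isolate the term $\Pr[A \mid E^c]$, and bound the resulting difference by $\Pr[E]$. The only mild point of care is the degenerate case $\Pr[E^c]=0$ (i.e.\ $\Pr[E]=1$), where the conditional probability $\Pr[A\mid E^c]$ is undefined; either one adopts the convention that the inequality is vacuous there, or one simply restricts attention to $\Pr[E]<1$, in which case the bound is trivially non-vacuous only when $\Pr[E]$ is small anyway.

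\textbf{Key steps.} First I would write $\Pr[A] = \Pr[A \cap E] + \Pr[A \cap E^c] = \Pr[A \cap E] + \Pr[A \mid E^c]\Pr[E^c]$. Subtracting $\Pr[A \mid E^c]$ from both sides gives
\[
\Pr[A] - \Pr[A \mid E^c] = \Pr[A \cap E] - \Pr[A \mid E^c]\big(1 - \Pr[E^c]\big) = \Pr[A \cap E] - \Pr[A \mid E^c]\Pr[E].
\]
Now both $\Pr[A\cap E]$ and $\Pr[A\mid E^c]\Pr[E]$ lie in the interval $[0, \Pr[E]]$: the former because $A \cap E \sub E$, and the latter because $\Pr[A\mid E^c] \le 1$. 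Hence their difference has absolute value at most $\Pr[E]$, which is exactly the claim. An equivalent one-line packaging is to note $\Pr[A] - \Pr[A\mid E^c] = \Pr[E]\big(\Pr[A\mid E] - \Pr[A\mid E^c]\big)$ and then use that the bracketed quantity is a difference of two numbers in $[0,1]$, hence bounded in absolute value by $1$.

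\textbf{Main obstacle.} There is essentially no obstacle here — this is a routine manipulation of conditional probabilities, included in the paper only as a convenient bookkeeping tool for controlling low-probability ``bad'' events in the proof of Theorem~\ref{thm:random-ark}. The only thing worth stating explicitly is the convention handling $\Pr[E]=1$, after which the argument is a two-line computation.
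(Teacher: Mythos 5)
Your proof is correct and matches the paper's argument essentially verbatim: both derive the identity $\Pr[A]-\Pr[A\mid E^c]=\Pr[A\cap E]-\Pr[A\mid E^c]\Pr[E]$ via total probability and then bound each of the two terms by $\Pr[E]$. Your aside about the degenerate case $\Pr[E]=1$ and the alternative packaging via $\Pr[A\mid E]-\Pr[A\mid E^c]$ are fine but not needed.
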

\begin{proof}
    We prove the identity
    $\Pr[A]-\Pr[A \vert E^c] = \Pr[A \cap E] - \Pr[E]\Pr[A|E^c]$,
    from which the desired bounds follow since $0 \le \Pr[A \cap E] \le \Pr[E]$.
    By the law of total probability,
    $\Pr[A] - \Pr[A \cap E^c] = \Pr[A \cap E]$,
    and $\Pr[A \cap E^c] = \Pr[A \vert E^c](1-\Pr[E])$,
    so the desired identity follows.
\end{proof}

    
    

    

We are now ready to give the proof.

\begin{proof}[Proof of Theorem~\ref{thm:random-ark}]
    Put $V=\F^n$.
    Henceforth, for $T \sim \Tr{d}{n}{r}(\F)$ 
    with a given partition-rank decomposition $T=\sum_{i=1}^r R_iS_i$, we let $R_i$, for concreteness, depend on $x^{(d)}$ (so $S_i$ does not).
    For $T \sim \T{d}{n}$ and $x=(x^{(1)},\ldots,x^{(d-1)}) \in V^{d-1}$, 
    we define the events
    \[A = \{(x,T) \vert \nabla T(x)=0\},\]
    \[B = \{(x,T) \vert S_1(x)=\cdots=S_r(x)=0\} ,\]
    \[E = \{(x,T) \vert \nabla R_1(x),\ldots,\nabla R_r(x) \text{ are linearly dependent}\} .\]
    We claim that $A \sm E = B \sm E$.
    Indeed, observe that 
    $\nabla T(x) = \sum_{i=1}^r S_i(x)\nabla R_i(x)$.
    Thus, for $x$ fixed, $\nabla T(x) \in \F^n$ can be written as a linear combination of $\nabla R_1(x),\ldots,\nabla R_r(x) \in \F^n$ with respective coefficients $S_1(x),\ldots,S_r(x) \in \F$.
    Therefore, 
    if $(x,T) \notin E$ then we have the equivalence
    $(x,T) \in A$ if and only if $(x,T) \in B$. That is, $A \sm E = B\sm E$, as claimed.
    Therefore, we may apply Lemma~\ref{lemma:diff} to deduce that
    \begin{equation}\label{eq:pr-dev}
        \Big|\Pr[A]-\Pr[B]\Big| \le \Pr[E] .
    \end{equation}
    Note that 
    $\Pr[A] = \E_T \Pr_x[A \vert T] = \E_T \bias(T)$.
    We will prove that 
    \begin{equation}\label{eq:random-ark-goal1}
        \Pr[B] \sim q^{-r},
    \end{equation}
    and that
    \begin{equation}\label{eq:random-ark-goal2}
        \Pr[E] \le 3q^{-(n-r+1)}.
    \end{equation}
    To see why these suffice, 
    observe that~(\ref{eq:pr-dev}) implies, assuming~(\ref{eq:random-ark-goal2}), that
    \[\Big|\E_T \bias(T)/q^{-r}-\Pr[B]/q^{-r}\Big| \le 3q^{-(n-2r+1)} 
    \le 3q^{-(r+\e n)},\]
    where the last inequality uses the statement's assumption $r \le (1-\e)n/2$.
    Assuming~(\ref{eq:random-ark-goal1}), and taking the limit as $n\to\infty$, we get $\E_T \bias(T) \sim q^{-r}$, as desired. 

    It therefore remains to prove~(\ref{eq:random-ark-goal1}) and~(\ref{eq:random-ark-goal2}). We will use the event $F = \{(x,T) \vert x \text{ is trivial}\}$.
    For~(\ref{eq:random-ark-goal1}), recall that the coefficients of $S_1,\ldots,S_r$ are chosen independently and uniformly at random. 
    By Lemma~\ref{lemma:nontrivial-pt}, for any nontrivial $x \in V^d$
    we have that $S_1(x),\ldots,S_r(x)$ are $r$ independent, uniformly random scalars in $\F$.
    Therefore, $\Pr[B \vert F^c] = q^{-r}$.
    By Lemma~\ref{lemma:diff-1},
    $\big|\Pr[B] - \Pr[B \vert F^c]\big| \le \Pr[F] = q^{-(d-1)n}$, which thus proves~(\ref{eq:random-ark-goal1}).
    
    For~(\ref{eq:random-ark-goal2}), observe that, since the coefficients of $R_1,\ldots,R_r$ are chosen independently and uniformly at random (from $\F$), 
    so are the coefficients of $\nabla R_1,\ldots,\nabla R_r$.
    By Lemma~\ref{lemma:nontrivial-pt}, for any nontrivial $x \in V^{d}$
    we have that $\nabla R_1(x),\ldots,\nabla R_r(x)$ are $r$ independent, uniformly random vectors in $\F^n$.
    Therefore, by Lemma~\ref{lemma:rank-deficient}, $\Pr[E \vert F^c] \le 2q^{-(n-r+1)}$.
    By Lemma~\ref{lemma:diff-1},
    $\big|\Pr[E] - \Pr[E \vert F^c]\big| \le \Pr[F] = q^{-(d-1)n}$,
    which thus proves~(\ref{eq:random-ark-goal2}).
    As explained above, this completes the proof.
\end{proof}

\end{document}